\def\mathscr{\mathcal}
\newtheorem{example}{Example}[section]}
\newtheorem{Def}[example]{Definition}}
\newtheorem{prop}[example]{Proposition}
\newtheorem{thm}[example]{Theorem}
{\theorembodyfont{\rmfamily}\newtheorem{rem}[example]{Remark}}
\newtheorem{cor}[example]{Corollary}
\newtheorem{lem}[example]{Lemma}
  \def\FTop{\mathsf{FTop}}
  \def\Crs{\mathsf{Crs}}
  \def\CRS{\mathsf{CRS}}
\def\Set{\mathsf{Set}}
\def\C{\mathsf{C}}
\def\geq{\geqslant}
\newenvironment{proof}{\noindent {\bf Proof }}{\rule{0mm}{1mm}\hfill $\Box$

\mbox{}}
\newcommand{\threeaxes}[3]{\def\objectstyle{\scriptstyle}  \objectmargin={0pt}
\xy
(0,0)*+{}="a",(0,-6)*+{\rule{0em}{1.5ex}#2}="b",(7,0)*+{\;#1}="c",
(14,-3)*+{\;#3}="d" \ar@{->} "a";"b" \ar @{->}"a";"c"  \ar
@{->}"a";"d"\endxy }
\newcommand{\directs}[2]{\def\objectstyle{\scriptstyle}  \objectmargin={0pt}
\xy
(0,4)*+{}="a",(0,-2)*+{\rule{0em}{1.5ex}#2}="b",(7,4)*+{\;#1}="c"
\ar@{->} "a";"b" \ar @{->}"a";"c" \endxy }
\newcommand{\xdirects}[2]{\def\objectstyle{\scriptstyle}  \objectmargin={0pt}
\xy
(0,0)*+{}="a",(0,-6)*+{\rule{0em}{1.5ex}#2}="b",(7,0)*+{\;#1}="c"
\ar@{->} "a";"b" \ar @{->}"a";"c" \endxy }
\newcommand{\sdirects}[2]{\def\objectstyle{\scriptstyle}  \objectmargin={0pt}
\xy
(0,2.2)*+{}="a",(0,-2.5)*+{\rule{0em}{1.5ex}#2}="b",(7,2.2)*+{\;#1}="c"
\ar@{->} "a";"b" \ar @{->}"a";"c" \endxy }
\def\hat{\widehat}
\def\I{\mathsf{I}}
\def\eps{\varepsilon}
\def\epsilon{\varepsilon}
\def\ge{\geqslant}
\def\hat{\widehat}
\def\eps{\varepsilon}
\def\epsilon{\varepsilon}
\def\tilde{\widetilde}
\def\ge{\geqslant}
\def\geq{\geqslant}
\def\wt{\widetilde}
\def\xybiglabels{\def\labelstyle{\textstyle}}
 \def\eps{\varepsilon}
\def\Gpds{\mathsf{Gpd}}
\def\bI{\mathbb{I}}
\def\bZ{\mathbb{Z}}
\def\ogpd{$\omega$-$\mathsf{Gpd}$}
\def\ogpdm{\omega\mbox{-}\mathsf{Gpd}}
\def\CAT{\mathsf{CAT}}
\def\epsilon{\varepsilon}
\def\sI{\mathcal{I}}
\def\Ab{\mathsf{Ab}}
\def\Cost{\operatorname{Cost}}
\begin{document}

\title{Covering morphisms  of  crossed
complexes and of cubical omega-groupoids are closed under   tensor
product}
\author{Ronald Brown\thanks{ School of
Computer Science, Bangor University, Gwynedd, LL57 1UT, UK; email:
r.brown@bangor.ac.uk . This  author would like to thank the
 Macquarie University for support of a visit in March, 1996, and
the   Leverhulme Foundation for support by an Emeritus Fellowship
2002-2004.} \and Ross Street\thanks{Department of Mathematics,
Macquarie University, NSW 2109, Australia; email:
ross.street@mq.edu.au. This author is grateful for partial support
of ARC Discovery Grant DP0771252. }}

\maketitle

\begin{abstract}The aim is the proof of the theorems of the title and the corollary
that the tensor product of two free crossed resolutions of groups or
groupoids is also a free crossed resolution of the product group or
groupoid. The route to this corollary  is through the equivalence of
the category of crossed complexes with that of cubical
$\omega$-groupoids with connections where the initial definition of
the tensor product lies. It is also in the latter category that we
are able to  apply techniques of dense subcategories  to identify
the tensor product of covering morphisms as a covering morphism.
\end{abstract}

\section*{Introduction}
A series of papers by R. Brown  and P.J. Higgins, surveyed in
\cite{Brown-grenoble,Brown-handbook}, has shown how the category
$\Crs$ of crossed complexes is a useful tool for certain nonabelian
higher dimensional local-to-global problems in algebraic topology,
for example  the calculation of homotopy 2-types of unions of
spaces; and also that crossed complexes are suitable coefficients
for nonabelian cohomology, generalising an earlier use of crossed
modules as coefficients. While crossed complexes have a long history
in algebraic topology, particularly in the reduced case, i.e. when
$C_0$ is a singleton, the extended use in these papers made them a
tool whose properties could be developed independently of classical
tools in algebraic topology such as simplicial approximation. A key
new tool for this approach was cubical, using the notion of cubical
$\omega$--groupoids with connections. A book is in press  on these
topics, \cite{BHS}.

One aspect of this work is that it leads to specific calculations of
homotopical and group theoretical invariants; as an example,  the
notion of identities among relations for a presentation of groups
combines both of these fields, since it also concerns the second
homotopy group $\pi_2(K(\mathcal P))$ of the 2-complex determined by
a presentation $\mathcal P$ of a group. Calculations of this module
were obtained in \cite{BRa99} not through `killing homotopy groups',
or its homological equivalent, finding generators of a kernel, but
through the notion of `constructing a home for a contracting
homotopy'. To this end we had to work by constructing a free crossed
resolution $\widetilde{F}$ of  the universal covering crossed
complex  of a group or groupoid. Any construction of a contracting
homotopy of $\widetilde{F}$ breaks the symmetry of the situation, as
is necessary, and also may rely on rewriting methods, such as
determining a maximal tree in the Cayley graph. Thus we see covering
crossed complexes as a basic tool in the application of crossed
complex methods, in analogy to the application of covering spaces in
algebraic topology.

A major tool for dealing with homotopies  is the construction of a
monoidal closed structure on the category $\Crs$ of crossed
complexes giving an exponential law of the form
\begin{equation*}
  \Crs(A \otimes B, C) \cong \Crs(A, \CRS(B,C))
\end{equation*}
for all crossed complexes $A,B,C$, \cite{BH87}.

This monoidal closed structure and the notion of classifying space
$BC$ of a crossed complex $C$  is applied in \cite{BH91} to give the
homotopy classification result
\begin{equation*} [X,BC] \cong [\Pi X_*, C]
\end{equation*}
where on the left hand side  with $X$  a CW-complex,  we have
topology, and  on the right hand side, with $\Pi X_*$ the
fundamental crossed complex of the skeletal filtration of $X$, we
have  the algebra of crossed complexes.

Tonks proved in \cite[Theorem 3.1.5]{T94} that the tensor product of
free crossed resolutions of a group is a free crossed resolution:
his proof used the  crossed complex Eilenberg-Zilber Theorem,
\cite[Theorem 2.3.1]{T94}, which was  published in \cite{tonks:EZ}.
The result on resolutions is applied in for example \cite{BP} to
construct some small free crossed resolutions of a product of
groups. We give here an alternative approach to this result.

The PhD thesis \cite{Day-thesis} of Brian Day addressed the problem
of extending a promonoidal structure on a category $\mathcal{A}$
along a dense functor $J:\mathcal{A}\rightarrow \mathcal{X}$ into a
suitably complete category $\mathcal{X}$ to obtain a closed monoidal
structure on $\mathcal{X}$. The two published papers
\cite{Day-midwest,Day-reflect}  are only part of the thesis and
represent components towards the density result. The formulas in,
and the spirit of, Day's work suggested our approach to the present
paper. However, here the category $\mathcal{A}$ is actually small
(consisting of cubes) and monoidal, and so is an easy case of Day's
general setting. The same simplification occurs in the approach to
the Gray tensor product of 2-categories in \cite{street-gray-2cat},
and of globular $\infty $-categories in \cite[Proposition
4.1]{crans-thesis}.

One  advantage of cubical methods is the standard formula
\begin{equation}\label{eq:tenscubes}
I^m_* \otimes I^n_* \cong I^{m+n}_*
\end{equation} where $I^m_*$ is the standard
topological $m$-cube with its standard skeletal filtration. This
equation is modelled in the category \ogpd\ by the formula
\begin{equation}\label{eq:tensomega cubes} \bI^m \otimes \bI^n \cong
\bI^{m+n}
\end{equation}
where for $m \geq 0$ $\bI^m$ is the free \ogpd\ on one generator
$c_m$ of dimension $m$. We  apply \eqref{eq:tensomega cubes} by
proving in Theorem \ref{thm:dense} that the full subcategory of
\ogpd\ on these objects $\bI^m, m \geq 0$, is dense in \ogpd . The
proof requires a further property of $\omega$-groupoids, that they
are $T$-complexes \cite{BH81:algcub,BH81:T}. We then use the methods
of Brian Day \cite{Day-reflect} to characterise the tensor product
on \ogpd\ as determined by the formula \eqref{eq:tensomega cubes}.

We use freely the notions and properties of ends and coends, for
which see \cite{Mac71}.

The final ingredient we need is the fact that if $p\colon  \wt{C}
\to C$ is a covering morphism of crossed complexes then $p^*\colon
\Crs/C \to \Crs/\wt{C}$ preserves colimits, since it has a right
adjoint. This result is due to Howie \cite{How79}, in fact for the
case of a fibration rather than just a covering morphism. Because of
the equivalence of categories, this applies also to the case of the
category \ogpd. However we need to characterise fibrations and
coverings in the category \ogpd. This is done in Section
\ref{sec:fibrandcov}. It is possible that the covering morphisms are
part of a factorization system as are the discrete fibrations in the
contexts of \cite{bourn-interngpds} and
\cite{street-verity-factorization}.
\section{Crossed complexes}
For the purposes of algebraic topology the most important feature of
the category $\Crs$ of crossed complexes  is the {\it fundamental
crossed complex functor}, \cite{BH81:col},
$$\Pi \colon  \FTop \to \Crs$$
from the category of filtered spaces
$$X_*\colon  X_0 \subseteq X_1 \subseteq \cdots \subseteq  X_n \subseteq \cdots\subseteq X_\infty .  $$
An extra assumption is commonly made that $X_\infty$ is the union of
all the $X_n$, but we do not use that condition. For such a filtered
space $X_*$, various relative homotopy groups
$$(\Pi X_*)_n(x)= \pi_n(X_n, X_{n-1},x)$$ for $x \in X_0$ and  $n
\geq 2$, may be combined with the fundamental groupoid $(\Pi X_*)_1=
\pi_1(X_1,X_0)$ on the set $X_0$ to give a crossed complex $\Pi
X_*$.  There are boundary operations $\delta_n\colon (\Pi X_*)_n \to
(\Pi X_*)_{n-1}$ and operations of $(\Pi X_*)_1 $ on $(\Pi X_*)_n, n
\geq 2$, satisfying axioms which are characteristic for crossed
complexes. This last fact follows because for every crossed complex
$C$ there is a filtered space $X_*$ such that $C \cong \Pi X_*$
\cite[Corollary 9.3]{BH81:col}.

The use of crossed complexes in the single vertex case continues
work of J.H.C. Whitehead,  \cite{W49:CHII,W50:SHT},  and of J.
Huebschmann,  \cite{Hu80}.

\section{Fibrations and covering morphisms of crossed complexes}
\label{secII4:FibCrs} The definition of fibration of crossed
complexes we are using is due to Howie in \cite{How79}; it requires
the definition of fibration of groupoids  given in \cite{B70,B2006},
generalising the definition of covering morphism of groupoids given
in \cite{H71}. The notion of fibration of crossed complexes given in
this Section leads to a Quillen model structure on the category
$\Crs$, as shown by Brown and Golasi{\'n}ski in \cite{BG89}, and
compared with model structures  on related categories in
\cite{aras-metayer-model}.

First recall that for a groupoid $G$ and object $x$ of $G$ we write
$\Cost_Gx$ for the union of the $G(u,x)$ for all objects $u$ of $G$.
A morphism of groupoids $p\colon H \to G$ is called a {\it fibration
(covering morphism)}, \cite{B70},  if the induced map  $\Cost_Hy \to
\Cost_Gpy$ is a surjection (bijection) for all objects $y$ of $H$.
(Here we use the conventions of \cite{BHS} rather than
 of \cite{B2006}.)

\begin{Def} %\label{Def;6.1}
A morphism $p \colon  D \to C$ of crossed complexes is a {\it
fibration (covering morphism)} if \begin{enumerate}[(i)] \item the
morphism $p_1 \colon  D_1 \to C_1 $ is a {\it fibration (covering
morphism) } of groupoids;
\item for each $n \geqslant 2$ and $y \in D_0$,
the morphism of
 groups $p_n \colon  D_n (y) \to C_n (py)$ is surjective (bijective).
\end{enumerate}
The morphism $p$ is a {\it trivial fibration} if it is a fibration,
and also a weak equivalence, by which is meant that $p$ induces a
bijection on $\pi _0$ and isomorphisms $\pi _1(D,y) \to \pi
_1(C,py)$, $H_n (D,y) \to H_n (C,py)$ for all $y \in D_0$ and $n
\geqslant 2$. \hfill $\Box$ \end{Def}

\begin{rem}  It is worth remarking that the
notion of covering morphism of groupoids appears in the paper
\cite[(7.1)]{PASmith1} under the name `regular morphism'.  Strong
applications of covering morphisms to combinatorial group theory are
given in \cite{H71}, and a full exposition  is also given in
\cite[Chapter 10]{B2006}.

A fibration of groupoids gives rise to a family of exact sequences,
\cite{B70,B2006}, which are extended  in \cite{How79} to a family of
exact sequences arising from a fibration of crossed complexes. These
latter exact sequences have been applied to the classification of
nonabelian extensions of groups in \cite{BMu}, and to the homotopy
classification of maps of spaces in \cite{brown-homclass}.
  \hfill $\Box$
\end{rem}
In Section \ref{sec:fibrandcov} we will need the following result,
which is an  analogue for crossed complexes of known results for
groupoids \cite[10.3.3]{B2006} and for spaces.
\begin{prop}\label{prop:II4liftmorphcov}
Let $p\colon  \wt{C} \to C$ be a covering morphism of crossed
complexes, and let $y \in\wt{C}_0$. Let $F$ be a connected crossed
complex, let $x \in F_0$, and let $f\colon  F \to C$ be a morphism
of crossed complexes such that $f(x)=p(y)$. Then the following are
equivalent:
\begin{enumerate}[\rm (i)]
\item $f$ lifts to a morphism $\tilde{f}\colon  F \to \wt{C}$ such that
$\tilde{f}(x)=y$ and $p\tilde{f}=f$;
\item $f(F_1(x)) \subseteq p(\wt{C}_1(y))$;
\item $f_*(\pi_1(F,x)) \subseteq p_*(\pi_1(\wt{C},y))$.
\end{enumerate}
Further, if the lifted morphism as above exists, then it is unique.
\end{prop}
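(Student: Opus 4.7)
The plan is to imitate the classical groupoid lifting criterion (as in \cite[10.3.3]{B2006}) in dimension one, and then to use that a covering morphism of crossed complexes is a bijection on every vertex group in dimensions $\geq 2$ to lift everything in higher dimensions canonically. Throughout I abbreviate $\delta=\delta_2$ and recall that $\pi_1(F,x)=F_1(x)/\delta F_2(x)$, and similarly for $C$ and $\widetilde{C}$.

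The implication (i)$\Rightarrow$(ii) is immediate, since $f=p\tilde{f}$ forces $f(F_1(x))=p(\tilde{f}(F_1(x)))\subseteq p(\widetilde{C}_1(y))$. The implication (ii)$\Rightarrow$(iii) is obtained by passing to fundamental-group quotients, under which the subgroup inclusion in (ii) descends. For (iii)$\Rightarrow$(ii), let $\alpha\in F_1(x)$; by (iii) the class $[f(\alpha)]\in\pi_1(C,p(y))$ equals $[p(\tilde{\beta})]$ for some $\tilde{\beta}\in\widetilde{C}_1(y)$, so $f(\alpha)=p(\tilde{\beta})\cdot\delta(c)$ for some $c\in C_2(p(y))$. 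Since $p_2$ restricts to a bijection $\widetilde{C}_2(y)\to C_2(p(y))$, lift $c$ to $\tilde{c}\in\widetilde{C}_2(y)$ to obtain $f(\alpha)=p(\tilde{\beta}\cdot\delta\tilde{c})\in p(\widetilde{C}_1(y))$.

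The substantive direction is (ii)$\Rightarrow$(i). Since $F$ is connected and the kernel of $F_1\to\pi_1(F)$ is totally disconnected, for each $x'\in F_0$ there exists $\gamma\in F_1(x,x')$. Because $p_1$ is a covering morphism of groupoids, $f(\gamma)$ has a unique lift $\tilde{\gamma}$ starting at $y$; set $\tilde{f}(x')$ to be its target. Condition (ii) is precisely what is needed for well-definedness: a second choice $\gamma'\in F_1(x,x')$ gives a loop $\gamma'\gamma^{-1}\in F_1(x)$, and (ii) supplies $\tilde{\eta}\in\widetilde{C}_1(y)$ with $p(\tilde{\eta})=f(\gamma'\gamma^{-1})$; then $\tilde{\eta}\tilde{\gamma}$ is a lift of $f(\gamma')$ starting at $y$, and unique path lifting identifies its target with that of $\tilde{\gamma}$. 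For $\alpha\in F_1(x',x'')$ define $\tilde{f}_1(\alpha)$ as the unique lift of $f(\alpha)$ starting at $\tilde{f}(x')$; composition with $\tilde{\gamma}$ shows that its target is $\tilde{f}(x'')$ and that $\tilde{f}_1$ is a groupoid morphism over $f_1$. In each dimension $n\geq 2$ and at each $x'\in F_0$, the bijection $p_n\colon\widetilde{C}_n(\tilde{f}(x'))\to C_n(f(x'))$ then determines a unique $\tilde{f}_n$ with $p_n\tilde{f}_n=f_n$. Compatibility of $\tilde{f}$ with boundaries, group operations, and the action of $F_1$ is a diagram chase: each required identity holds after applying $p$, and $p$ is either injective on vertex groups in dimensions $\geq 2$ or admits unique lifts in dimension $1$, so every identity lifts. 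Uniqueness of $\tilde{f}$ follows by the same path-lifting argument together with the injectivity of $p_n$ on vertex groups for $n\geq 2$.

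The main obstacle is the well-definedness of $\tilde{f}$ on $F_0$; this is exactly the point at which hypothesis (ii) must be used, and once $\tilde{f}_1$ has been constructed the higher-dimensional extension and the verification that $\tilde{f}$ is a morphism of crossed complexes are routine extensions of the standard covering-space argument to the crossed-complex setting.
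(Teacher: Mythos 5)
Your proof is correct and follows essentially the same route as the paper's: condition (iii) is converted into liftability of loops at $x$ by using the bijection $p_2\colon \widetilde{C}_2(y)\to C_2(p(y))$ to absorb a boundary term, the lift is then built in dimension $1$ by unique path lifting for the groupoid covering $p_1$, and in dimensions $n\geq 2$ by inverting the bijections $p_n$ on vertex groups. The only difference is organizational --- the paper proves (iii)$\Rightarrow$(i) directly, first treating the case $F_0=\{x\}$ and then extending via chosen paths $\tau_u\in F_1(u,x)$, whereas you insert the step (iii)$\Rightarrow$(ii) and define the lift on objects and arrows directly by path lifting --- and the compatibility checks you leave as routine are exactly those the paper also leaves to the reader.
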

\begin{proof} That (i)$\Rightarrow$ (ii) $\Rightarrow$ (iii) is
clear.

So we assume  (iii) and prove (i).

We first assume $F_0$ consists only of $x$. Then the value of
$\tilde{f}$ on $x$ is by assumption defined to be $y$.

Next let $a \in F_1(x)$. By the assumption (iii) there is $c \in
C_2(py)$ and $b \in \wt{C}(y)$ such that $f(a)= p(b) + \delta_2(c)$.
Since $p$ is a covering morphism there is a unique $d \in
\wt{C}_2(y)$ such that $p(d)=c$. Thus $f(a)= p(b+ \delta_2(d))$. So
we define $\tilde{f}(a)=b+ \delta_2(d) \in \wt{C}_2(y)$. It is easy
to prove from the definition of covering morphism of groupoids that
this makes $\tilde{f}$ a morphism $F_1(x) \to \wt{C}_1(y)$ such that
$p\tilde{f}=f$.

For $n \geq 2$ we define $\tilde{f}\colon  F_n(x) \to \wt{C}_n(y)$
to be the composition of $f$ in dimension $n$ and the inverse of the
bijection $p\colon  \wt{C}_n(y)\to C_n(py)$.

It is now straightforward to check that this defines a morphism
$\tilde{f}\colon  F,x \to \wt{C},y$ of crossed complexes as
required.

If $F_0$ has more than one point, then we choose for each $u$ in
$F_0$ an element $\tau_u \in F_1(u,x)$ with $\tau_x=1_x$. Then
$f(\tau_u)$   lifts uniquely to $\bar{\tau}_u\in \Cost_{
\widetilde{C}}y\,$: any lift $\tilde{f}\colon F,x \to
\widetilde{C},y$ of $f$ must satisfy $\tilde{f}(\tau_u)=
\bar{\tau}_u$ so we take this as a definition of $\tilde{f}$ on
these elements.

If $a \in F_1(u,v)$ then $a=\tau_u + a' -\tau_v$ where $a' \in
F_1(x)$ and so we define $\tilde{f}(a)= \bar{\tau}_u +
\tilde{f}(a')- \bar{\tau}_v$. If $n \geq 2$ and $\alpha \in F_n(u)$
then  $\alpha^{\tau_u} \in F_n(x)$ and we define $\tilde{f}(\alpha)=
\tilde{f}(\alpha^{\tau_u})^{-\bar{\tau}_u}$.

It is straightforward to check that these definitions give  a
morphism $\tilde{f}: F,x \to \widetilde{C},y$ of crossed complexes
lifting $f$, and the uniqueness of such a lift is also easy to
prove.
\end{proof}
We will use the above result in the following form.
\begin{cor}\label{cor:pullbackcrs}
Let $p\colon  \wt{C} \to C$ be a covering morphism of crossed
complexes, and let $F$ be a connected and simply connected crossed
complex. Then the following diagram, in which each $\epsilon$ is an
evaluation morphism,  is a pullback in the category of crossed
complexes:
$$ \xybiglabels \xymatrix{\Crs(F,\wt{C}) \times F_{\phantom{x}} \ar [r]^-\epsilon \ar[d]_{p_* \times 1}
& \wt{C} \ar[d]^p\\
\Crs(F,{C}) \times F \ar [r]_-\epsilon & C,  }$$ where the sets of
morphisms of crossed complexes have the discrete crossed complex
structure.
\end{cor}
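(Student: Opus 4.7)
The plan is to show that the natural candidate map
\[
\phi\colon\Crs(F,\wt C)\times F\longrightarrow(\Crs(F,C)\times F)\times_C\wt C,\qquad(\wt g,a)\longmapsto\bigl((p\wt g,a),\wt g(a)\bigr),
\]
is an isomorphism of crossed complexes, where the right-hand side denotes the pullback computed dimensionwise (limits in $\Crs$ being formed at each level). Because $\Crs(F,\wt C)$ and $\Crs(F,C)$ carry discrete crossed complex structure, $\phi$ is manifestly a morphism of crossed complexes (all boundaries, group operations and actions act only on the $F$-factor) and commutes with the two evaluation morphisms by construction. The substance of the proof is therefore to check that $\phi$ is a bijection at every dimension.

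For surjectivity, take a triple $((g,a),\wt a)$ in the pullback, with $a$ an $n$-cell of $F$ and $g(a)=p(\wt a)$. Let $x\in F_0$ be the base point (or a chosen endpoint, for $n=1$) of $a$, and let $\wt c\in\wt C_0$ be the corresponding base point of $\wt a$; then $g(x)=p(\wt c)$. Because $F$ is connected and simply connected, $\pi_1(F,x)=1$, so hypothesis~(iii) of Proposition~\ref{prop:II4liftmorphcov} holds vacuously, and that proposition delivers a unique lift $\wt g\colon F\to\wt C$ with $p\wt g=g$ and $\wt g(x)=\wt c$. One must then verify $\wt g(a)=\wt a$: both cells share the base $\wt c$ and project under $p$ to the same element $g(a)=p(\wt a)$, so the defining bijectivity of $p$ on costars in dimension~$1$ and on the groups $\wt C_n(\wt c)\to C_n(p\wt c)$ for $n\ge 2$ forces equality. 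Hence $\phi(\wt g,a)=((g,a),\wt a)$. Injectivity is the same argument read backwards: two candidates $\wt g_1,\wt g_2$ with $\phi(\wt g_i,a)$ equal both lift $p\wt g_1=p\wt g_2$ and agree at the base point of $a$, so the uniqueness clause of Proposition~\ref{prop:II4liftmorphcov} yields $\wt g_1=\wt g_2$.

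The only genuine subtlety is the bookkeeping for $1$-cells, which have both a source and a target in $F_0$: one must fix a convention (e.g.\ base all lifts at the target endpoint) when invoking Proposition~\ref{prop:II4liftmorphcov}, and then confirm that the resulting assignment $((g,a),\wt a)\mapsto(\wt g,a)$ is independent of this choice. By the uniqueness clause of Proposition~\ref{prop:II4liftmorphcov}, any two conventions applied to the same triple yield the same $\wt g$, so the assignment is consistent. That $\phi^{-1}$ respects boundaries and the fundamental-groupoid action then reduces to the fact that $\wt g$ is itself a morphism of crossed complexes, and the compatibility with the base-point assignments is automatic from the connectedness of $F$.
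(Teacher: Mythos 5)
Your proposal is correct and follows essentially the same route as the paper, which simply observes that the pullback property is a restatement of the existence and uniqueness of liftings in Proposition \ref{prop:II4liftmorphcov}; you have merely written out the dimensionwise bijectivity check (existence of the lift through the base point of a cell gives surjectivity onto the fibre product, uniqueness gives injectivity) that this observation compresses.
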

\begin{proof}
This is simply a restatement of a special case of the existence and
uniqueness of liftings of morphisms established in the Proposition.
\end{proof}
\begin{rem}
Because the category $\Crs$ is equivalent to that of strict globular
$\omega$-groupoids, as shown in \cite{BH81:inf}, the methods of this
paper are also relevant to that category; see also
\cite{brownglobularhhgpd}. However we are not able to make use of
the globular case, nor even the $2$-groupoid case. \hfill $\Box$
\end{rem}

Let $C$ be a crossed complex. We write ${\Crs\mathsf{Cov}}/C$
\label{GSII5:CC49} for the full subcategory of the slice category
${\Crs}/C $ whose objects are the covering morphisms of $C$. The
following Theorem, which is proved in \cite{BRa99}, shows that the
classification of covering morphisms of crossed complexes, reduces
to that of covering morphisms of groupoids.

\begin{thm} \label{thm:II5-equivof categories}
If $C$ is a crossed complex, then the functor $\pi_1  \colon
{\Crs}\rightarrow {\Gpds}$ induces an equivalence of categories
 $$\pi_1'  \colon  {\Crs\mathsf{Cov}}/C \rightarrow {\Gpds\mathsf{Cov}}/(\pi_1C).$$
\end{thm}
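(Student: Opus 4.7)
The plan is to construct an explicit quasi-inverse $\Phi\colon \Gpds\mathsf{Cov}/(\pi_1 C)\to \Crs\mathsf{Cov}/C$ to $\pi_1'$. The guiding principle is that in a covering morphism $p\colon \wt{C}\to C$ the maps $p_n\colon \wt{C}_n(y)\to C_n(py)$ are already bijections for $n\geq 2$, so the cover is determined, up to unique isomorphism over $C$, by the groupoid covering $p_1$, which in turn is determined by $\pi_1 p$ since the quotient $\phi\colon C_1\to \pi_1 C$ only kills the image of $\delta_2^C$.

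For essential surjectivity, given $q\colon H\to \pi_1 C$ I set $\wt{C}_0:=\ob H$ and $\wt{C}_1:= H\times_{\pi_1 C} C_1$, so the second projection $\wt{C}_1\to C_1$ is a covering of groupoids (pullbacks of coverings are coverings). For $n\geq 2$ and $y\in \wt{C}_0$ over $\bar y\in C_0$, I put $\wt{C}_n(y):=C_n(\bar y)$ and transport $\delta_n^C$ for $n\geq 3$ and the action of $C_1(\bar y)$ on higher $C_n(\bar y)$ to $\wt{C}$ via the projection onto $C_1$. The boundary $\delta_2\colon \wt{C}_2(y)\to \wt{C}_1(y,y)$ requires unique lifting: since $\phi\delta_2^C=0$, each $\delta_2^C(c)$ has a unique lift in $\wt{C}_1(y,y)$. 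The crossed-complex axioms for $\wt{C}$ are direct transcriptions of those in $C$, and the natural projection $\wt{C}\to C$ is a covering by construction. The isomorphism $\pi_1\wt{C}\cong H$ over $\pi_1 C$ follows because $\ker\phi=\delta_2^C(C_2)$, so quotienting $\wt{C}_1$ by the lifted image of $\delta_2^C$ collapses the second factor and leaves the first projection $\wt{C}_1\to H$ as an isomorphism on the quotient.

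Full faithfulness is handled by Proposition \ref{prop:II4liftmorphcov}. Given coverings $\tilde p\colon \tilde D\to C$ and $\tilde p'\colon \tilde D'\to C$ and a morphism $g\colon \pi_1\tilde D\to \pi_1\tilde D'$ over $\pi_1 C$, I produce a lift $\tilde g\colon \tilde D\to \tilde D'$ one connected component at a time: on a component $E$ of $\tilde D$ with base point $x$, the condition that $g$ lives over $\pi_1 C$ is precisely hypothesis (iii) of the proposition applied to $\tilde p|_E\colon E\to C$ and the cover $\tilde p'\colon \tilde D'\to C$ at $g(x)$. The proposition then supplies a unique morphism $\tilde g|_E$ with $\tilde g|_E(x)=g(x)$ and $\tilde p'\tilde g|_E=\tilde p|_E$. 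Assembling across components yields the unique $\tilde g$ with $\pi_1\tilde g=g$, which gives both full faithfulness and the natural isomorphism $\Phi\circ\pi_1'\simeq\mathrm{id}$.

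The main obstacle is the axiomatic bookkeeping in the essential-surjectivity construction: verifying that $\delta_2^{\wt C}\delta_3^{\wt C}=0$, that $\delta_2^{\wt C}$ is equivariant for the $\wt{C}_1$-action, and that the Peiffer identity holds after re-indexing by $\wt{C}_0=\ob H$. These are all forced by the corresponding identities in $C$, but they rely crucially on the fact that $\wt{C}_1$ is the honest pullback over $\pi_1 C$, so that a loop $\tilde a\in\wt{C}_1(y,y)$ acts on $\wt{C}_n(y)=C_n(\bar y)$ through its image in $C_1(\bar y)$ in a manner compatible with $\phi$. Everything else in the argument is formal once this construction is in place.
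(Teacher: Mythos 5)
Your argument is essentially correct, but note that the paper itself offers no proof of this theorem: it is stated with a citation to [BRS99], so there is no in-text argument to compare against. Your construction is the natural one and, as far as I can tell, matches the spirit of the cited proof: build the quasi-inverse on objects by pulling back $\phi\colon C_1\to\pi_1C$ along $q\colon H\to\pi_1C$, use that covering morphisms of groupoids are stable under pullback, observe that the unique lift of each $\delta_2^C(c)$ ending at $y$ is forced to be a loop (its class in $\pi_1C$ is trivial and $q$ is a covering, so the $H$-component of the lift is an identity), and transport the higher-dimensional structure along the identity maps $\wt{C}_n(y)=C_n(\bar y)$. All the crossed-complex axioms then hold because the $\wt{C}_1$-action factors through the projection to $C_1$, exactly as you say, and the computation $\pi_1\wt{C}\cong H$ over $\pi_1C$ is correct since $\delta_2\wt{C}_2$ consists of the pairs $(1_z,\delta_2^C c)$. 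The use of Proposition \ref{prop:II4liftmorphcov} componentwise for full faithfulness is also the right mechanism, and the inclusion $(\tilde p|_E)_*\pi_1(E,x)\subseteq\tilde p'_*\pi_1(\tilde D',g(x))$ does follow from $\pi_1\tilde p'\circ g=\pi_1\tilde p$.

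Two small points deserve to be made explicit. First, you never verify that the functor $\pi_1'$ is well defined, i.e.\ that $\pi_1$ of a covering morphism of crossed complexes is a covering morphism of groupoids; this follows from the bijectivity of $p_2$ on the fibres of $\delta_2$ together with the bijection on costars in dimension $1$, and it is genuinely needed. Second, in the fullness step you assert that the assembled $\tilde g$ satisfies $\pi_1\tilde g=g$, but the lifting proposition only guarantees $\tilde g(x)=g(x)$ at the chosen base points and $\tilde p'\tilde g=\tilde p$; to conclude $\pi_1\tilde g=g$ on all objects and arrows you must use that $\pi_1\tilde p'$ is itself a covering of groupoids (two morphisms into the total groupoid of a covering that agree at a base point of each component and have equal composites with the covering agree everywhere). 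Both gaps are easily filled, but the second one silently relies on the first, so state it.
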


An alternative descriptions of the category ${\Gpds\mathsf{Cov}}/G$
for a groupoid $G$ in terms of actions of $G$ on sets is well known
and of course gives the classical theory of covering maps of spaces,
see \cite[Chapter 10]{B2006}. Consequently, if the crossed complex
$C$ is connected, and $x \in C_0$, then connected covering morphisms
of $C$ are determined up to isomorphism by conjugacy classes of
subgroups of $\pi_1(C,x)$. In particular, a universal cover
$\tilde{C} \to C$ of a connected crossed complex is constructed up
to isomorphism from a base point $x \in C_0$ and the trivial
subgroup of $\pi_1(C,x)$.

The monoidal closed structure and many other major properties of
crossed complexes are obtained by working through another algebraic
category, that of {\itshape cubical} $\omega ${\itshape -groupoids}
with connections which we abbreviate here to $\omega ${\itshape
-groupoids}. The category of these, which we write $\omega $-Gpd, is
a natural home for these deeper properties. The equivalence with
crossed complexes proved in [BH81] is a foundation for this whole
project. Indeed the definition of tensor product for $\omega
$-groupoids is much easier to deal with than that for crossed
complexes, and we find it easier to give a dense subcategory for
$\omega $-groupoids than for crossed complexes.

\section{Cubical omega-groupoids with connection}
We recall from \cite{BH81:algcub} that a {\it cubical
$\omega$-groupoid with connection} is in the first instance a
cubical set $\{K_n \mid n \ge 0\}$, so that it has face maps
$\{\partial_i^\pm \colon  K_n \to K_{n-1}\mid i=1, \ldots, n; n \ge
1 \}$ and degeneracy maps $\{\eps_i\colon  K_{n} \to K_{n+1}\mid
i=1, \ldots, n; n \ge 0 \}$ satisfying the usual rules. Further
there are {\it connections} $\{\Gamma_i^\pm\colon K_{n} \to
K_{n+1}\mid i=1, \ldots, n; n \ge 1 \}$ which amount to an
additional family of `degeneracies' and which in the case of the
singular cubical complex of a space derive from the monoid
structures $\max,\min$ on the unit interval $[0,1]$. Finally there
are $n$ groupoid structures $\{\circ_i\mid i=1, \ldots, n\}$,
defined on $K_n$ with initial, final and identity maps
$\partial^-_i,
\partial^+_i, \eps_i$  maps respectively.

The laws satisfied by all these structures are given in several
places, such as \cite{ABS,Grandis-cubsite}, and we do not repeat
them here. Note that because we are dealing with groupoid operations
$\circ_i$ we can set $\Gamma_i=\Gamma_i^-$ so that $\Gamma^+_i=
-_i-_{i+1}\Gamma_i$. In this case the laws were first  given in
\cite{BH81:algcub}.

A major example of this structure  is constructed from a filtered
space $X_*$ as follows.  One first forms the cubical set with
connections $R X_*$ which in dimension $n$ is the set of filtered
maps $I^n_* \to X_*$ where $I^n_*$ is the standard $n$-cube with its
skeletal filtration. Then $\rho X_*$ is the quotient of $R X_*$ by
the relation of homotopy through filtered maps and relative to the
vertices of $I^n$. It is easy to see that $\rho X_*$ inherits the
structure of cubical set with connection, and it is proved in
\cite[Theorem A]{BH81:col} that the obvious compositions on $R X_*$
are also inherited by $\rho X_*$ to make it what is called the
fundamental $\omega$-groupoid $\rho X_*$ of the filtered space
$X_*$.

The main result of \cite{BH81:algcub} is that the category  \ogpd\
is equivalent to the category $\Crs$ of crossed complexes, and in
\cite[Theorem 5.1]{BH81:col} it is proved that this equivalence
takes $\rho X_*$ to $\Pi X_*$.

As said in the Introduction, the free $\omega$-groupoid on a
generator $c_n$ of dimension $n$ is written $\bI^n$. More generally,
the free $\omega$-groupoid on a cubical set $K$ is written $\rho'
K$: this is a purely algebraic definition. A major result  is that
$\rho' K$ is equivalent to $\rho |K|_*$ where $|K|_*$ is the
skeletal filtration of the geometric realisation of $K$ and $\rho$
is defined above; so we write both as $\rho K$. This equivalence is
proved in \cite[Proposition 9.5]{BH81:col} for the case $K=\I^n$,
and the general case follows by similar methods.

We shall also need the properties of thin elements in an
$\omega$--groupoid $G$. An element $t$ of $G_n$ is called {\it thin}
if it has a decomposition as a multiple composition of elements
$\epsilon_i x, \Gamma_j y$, or their repeated negatives in various
directions. Clearly a morphism of $\omega$--groupoids preserves thin
elements.

A family  $B$ of elements of $\bI^{n}$ is called an {\it $(n-1)$-box
in $\bI^n$} if they form all faces $\partial^\pm_i c_n$ but one of
$c_n$. An element $x$ is called a {\it filler} of the box if these
all-but-one faces $\partial^\pm_i x$ are exactly the elements of
$B$.

Then $B$ generates a sub-$\omega$-groupoid $\bar B$ of $\bI^n$. The
image family $\hat{b}({B})$ of this by a morphism of
$\omega$--groupoids $\hat{b}\colon  \bar{B} \to G$ is called an
$(n-1)$-box in $G$. Again we have the notion of a filler of a box in
$G$. A basic result on $\omega$--groupoids \cite[Proposition
7.2]{BH81:algcub} is:
\begin{prop}[Uniqueness of thin fillers] A box in an
$\omega$--groupoid has a unique thin filler.
\end{prop}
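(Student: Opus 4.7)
The plan is to prove existence and uniqueness of the thin filler separately, in both cases by induction on $n$ and by combining the connections $\Gamma^\pm_i$, the degeneracies $\varepsilon_i$, and the $n$ groupoid compositions $\circ_j$ to assemble or cancel cells. The base case $n=1$ is immediate: a $0$-box in $G$ is a single vertex and the only thin element lying above it is the corresponding degeneracy.

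For existence, given a box $B$ with missing face $\partial^\alpha_k$, I would construct a thin filler explicitly. Each face $b^\beta_j$ of $B$ can be promoted to a thin element of $G_n$ by applying a suitable $\Gamma^\pm_i$ or $\varepsilon_i$ in a direction involving $k$; composing these $2n-1$ thin cells along the operations $\circ_i$ in an order modeled on the standard horn-filling combinatorics of the cube $\bI^n$ yields a thin element of $G_n$. The key calculation is that the interchange laws and the connection identities (such as $\Gamma^-_i a \circ_{i+1} \Gamma^+_i a = \varepsilon_i a$) make the faces of this composite match the $2n-1$ specified elements of $B$, leaving the missing face $\partial^\alpha_k$ determined.

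For uniqueness, let $x$ and $y$ be two thin fillers of $B$. I would invoke the folding operation $\Phi \colon G_n \to G_n$, which normalizes a cube by iteratively composing it with connection cells until all but one pair of opposite faces become iterated degeneracies of a common $0$-dimensional vertex. Folding preserves thinness and transforms face tuples in a computable way. Since the $2n-1$ known faces of $x$ and $y$ coincide, the folded elements $\Phi(x)$ and $\Phi(y)$ are thin elements with the same visible face data; the remaining data is encoded in a single element of $G_{n-1}$, which the inductive hypothesis applied to the lower dimensional box arising from the folding forces to be the same on both sides. Reversing the folding then gives $x = y$.

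The main obstacle I expect is the combinatorial bookkeeping. Both halves rely on a careful choice of composition order and on repeated application of the cubical, interchange, and connection identities; in particular the folding operation itself must first be set up rigorously, verifying that it is well-defined, preserves thinness, and tracks face data correctly. The low-dimensional case $n=2$ reduces to the familiar statement that three sides of a commuting square in a double groupoid with connections determine the fourth via the connection structure, and the inductive step proceeds along the same lines provided the cubical algebra is handled with care.
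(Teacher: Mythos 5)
First, a point of comparison: the paper does not prove this statement at all --- it is quoted, with citation, from \cite[Proposition 7.2]{BH81:algcub}, so there is no internal proof to measure your attempt against. Your plan does identify the machinery actually used in that source (an explicit composite array of connections and degeneracies for existence; the folding operations relating an $\omega$--groupoid to its associated crossed complex for uniqueness), but as written it is a programme rather than a proof, and the two places where you defer the work are exactly where the content lies. For existence, ``composing these $2n-1$ thin cells along the operations $\circ_i$ in an order modeled on the standard horn-filling combinatorics'' \emph{is} the theorem: without the explicit array and the verification that its outer faces are the prescribed elements of $B$, nothing has been established, and the bookkeeping you flag as an ``obstacle'' is not routine.

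The uniqueness half has a more structural gap. Two thin fillers $x,y$ of a box agree on $2n-1$ faces but a priori \emph{not} on the missing face $\partial^{\alpha}_{k}$, so your assertion that $\Phi(x)$ and $\Phi(y)$ ``have the same visible face data'' does not follow: the folding operations mix all $2n$ faces, including the unknown one, and this is precisely the point that needs an argument. The standard route is first to form the difference $z=(-_k x)\circ_k y$, a thin element all of whose faces outside direction $k$ are degenerate, so that $x=y$ reduces to showing $z$ is degenerate; that step in turn rests on two results you never state, namely that a thin element has trivial image under the folding map into the associated crossed complex $(\gamma G)_n$, and that an element of $G_n$ is determined by its faces together with that image. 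These are the substantive theorems of \cite{BH81:algcub} (reappearing as the $T$-complex axioms in \cite{BH81:T}), and they are not supplied by an induction of the kind you describe: your inductive step asks the $(n-1)$-dimensional uniqueness statement to control ``a single element of $G_{n-1}$ arising from the folding'', but no lower-dimensional box is identified to which the inductive hypothesis could apply, and I do not see how to produce one. So the proposal is a reasonable map of the terrain, but it does not yet contain a proof of either existence or uniqueness.
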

The thin elements in an $\omega$-groupoid satisfy Keith Dakin's
axioms, \cite{Dakin-thesis}:
\begin{enumerate}[D1)]
\item a degenerate element is thin;
\item every box has a unique thin filler;
\item if all faces but one of a thin element are thin, then so is
the remaining face.
\end{enumerate}
These axioms for a thin structure in fact give a structure
equivalent to that of an $\omega$--groupoid, as shown in
\cite{BH81:T}. That is, the connections {\it and the compositions}
are determined by the thin structure: we will use this fact in the
proof of Theorem \ref{thm:dense}. The following Lemma is also used
there.

\begin{lem}\label{lem:thinmorphism}
If $t \in G_n$ is a thin  element of an $\omega$--groupoid $G$ ,
then there is a thin element $b_t\in \bI^n$  such that
$\hat{t}(b_t)= \hat{t}(c_n)$.
\end{lem}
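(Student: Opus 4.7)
My plan is to exploit the uniqueness of thin fillers together with the fact that any morphism of $\omega$-groupoids preserves thin elements. Writing $\hat{t}\colon \bI^{n}\to G$ for the unique morphism with $\hat{t}(c_n)=t$ (which exists since $\bI^{n}$ is the free $\omega$-groupoid on a single $n$-dimensional generator $c_n$), the task reduces to producing a thin element $b_t\in\bI^{n}$ with $\hat{t}(b_t)=\hat{t}(c_n)$.

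First I would single out an arbitrary direction and sign, say $(-,1)$, and consider in $\bI^{n}$ the $(n-1)$-box $B$ consisting of all faces $\partial^{\pm}_i c_n$ except $\partial^{-}_1 c_n$. By the Uniqueness of Thin Fillers Proposition, this box has a unique thin filler $b_t\in \bI^{n}_n$, which by the definition of filler satisfies $\partial^{\pm}_i b_t=\partial^{\pm}_i c_n$ for every $(\pm,i)\neq(-,1)$.

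Next I would apply $\hat{t}$. Since morphisms of $\omega$-groupoids preserve thin elements, $\hat{t}(b_t)$ is thin in $G$, and its faces satisfy $\partial^{\pm}_i\hat{t}(b_t)=\hat{t}(\partial^{\pm}_i c_n)=\partial^{\pm}_i t$ for every $(\pm,i)\neq(-,1)$. Thus both $\hat{t}(b_t)$ and $t$ are thin fillers of the $(n-1)$-box in $G$ obtained as the image $\hat{t}(B)$. A second invocation of the Uniqueness of Thin Fillers Proposition therefore forces $\hat{t}(b_t)=t=\hat{t}(c_n)$, and $b_t$ is thin by construction, completing the argument.

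I do not expect any serious obstacle. The only subtle point, more conceptual than technical, is that the generator $c_n$ is itself \emph{not} thin (it is free of dimension $n$), which is precisely why the detour through Dakin's axiom D2 is needed: one replaces $c_n$ by the thin filler of the $(n-1)$-box formed by all but one of its faces, and then uniqueness of thin fillers in $G$ identifies the image of this replacement with $t$.
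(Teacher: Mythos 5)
Your proposal is correct and follows essentially the same route as the paper's own proof: form an $(n-1)$-box $B$ from all but one face of $c_n$, take its unique thin filler $b_t$ in $\bI^n$, push forward along $\hat{t}$ (which preserves thinness), and conclude $\hat{t}(b_t)=t$ by uniqueness of thin fillers in $G$. The only cosmetic difference is that you fix the omitted face to be $\partial^-_1 c_n$ while the paper leaves the choice of box unspecified; this changes nothing.
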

\begin{proof}
Let $\hat{t}: \bI^n \to G$ be the morphism such that $\hat{t}(c_n)=
t$.  We can find a box $B$ in $\bI^n$ and such that $t$ is a filler
of $\hat{t}\,|\colon \bar{B} \to G$. This box $B$ in $\bI^n$ also
has a unique thin filler $b_t$ in $\bI^n$. Since $\hat{t}$ is a
morphism of $\omega$-groupoids, it preserves thin elements and so
$\hat{t}(b_t)$ is thin and also a filler of the box $B$ in $G$. By
uniqueness of thin fillers $\hat{t}(b_t)=t =\hat{t}(c_n)$.
\end{proof}
\begin{rem}
Thin elements in higher categorical rather than groupoid situations
are also used in
\cite{street-orientedsimplexes,higgins-thin,steiner-thinelements,verity-comlicial}.
\hfill $\Box$
\end{rem}
\section{Fibrations and coverings of omega-groupoids}\label{sec:fibrandcov}We now transfer
to cubical $\omega$--groupoids the definition in Section
\ref{secII4:FibCrs} of fibration and covering morphism of crossed
complex.

\begin{thm}\label{fibration}
Let $p\colon G \to H$ be a morphism of \ogpd s. Then the
corresponding morphism of crossed complexes $\gamma(p)\colon
\gamma(G) \to \gamma(H) $ is a fibration (covering morphism) if and
only if $p\colon G \to H$ is a Kan fibration (covering map) of
cubical sets.
\end{thm}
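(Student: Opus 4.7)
The plan is to handle each direction separately, using throughout the description of the functor $\gamma$: the component $\gamma(G)_n(y)$ for $n \geq 2$ consists of $n$-cubes in $G$ whose faces (except one) are iterated degeneracies of the vertex $y$, and $\gamma(G)_1$ is the underlying groupoid of $G_1$.

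For \emph{Kan fibration} $\Rightarrow$ \emph{fibration of crossed complexes}, each lifting problem posed by the definition in Section~\ref{secII4:FibCrs} translates directly into a Kan lifting problem. Condition~(i) is the one-dimensional Kan condition on $G_1 \to H_1$ verbatim. For condition~(ii), given $y \in G_0$ and an element $\alpha \in \gamma(H)_n(p(y))$, I would assemble in $G$ the $(n-1)$-box built from the iterated degeneracies at $y$ matching the degenerate faces of $\alpha$; the Kan property then supplies a filler in $G$, whose remaining faces are automatically degenerate at $y$, placing the filler in $\gamma(G)_n(y)$ and lifting $\alpha$.

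The converse direction is more substantial and rests on the Uniqueness of Thin Fillers proposition. Given a box $B$ in $G$ missing (say) the face in direction $i$, and a filler $h \in H_n$ of $p(B)$, I would first take the unique thin filler $t \in G_n$ of $B$; since morphisms of $\omega$-groupoids preserve thin elements (as noted after Lemma~\ref{lem:thinmorphism}), $p(t)$ is the unique thin filler of $p(B)$ in $H$. The cubes $p(t)$ and $h$ fill the same box and hence agree on all faces except in direction $i$. I would then form the ``difference'' $d = (-_i p(t)) \circ_i h$, whose faces for $j \neq i$ are compositions of an element with its $\circ_i$-inverse, hence degenerate by the $\omega$-groupoid axioms; up to the folding operations implementing the equivalence, $d$ represents an element of $\gamma(H)_n$ at an appropriate vertex. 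Using the crossed complex fibration hypothesis on $\gamma(p)$ I would lift $d$ to $\tilde d$ in $\gamma(G)_n$ based at a vertex already supplied by $B$, and recover the Kan filler as $g := t \circ_i \tilde d$, recomposing in direction $i$.

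The main obstacle will be the bookkeeping inside this second direction: putting $d$ into an explicit crossed-complex form and confirming that the recomposition $t \circ_i \tilde d$ really produces a filler of $B$ whose image is $h$. This is a routine but delicate calculation using the interchange of face operators with $\circ_i$ and $-_i$, and it is best organised via the folding maps underpinning the equivalence \ogpd\ $\simeq \Crs$, which package this ``difference'' intrinsically. For the covering-morphism version, surjectivity is replaced throughout by bijectivity; uniqueness of the Kan lift $g$ then follows from the uniqueness of the thin filler $t$ combined with the uniqueness of $\tilde d$ guaranteed by the covering property of $\gamma(p)$.
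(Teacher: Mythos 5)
Your forward direction (Kan fibration of $Up$ $\Rightarrow$ fibration of $\gamma(p)$) is correct: the lifting problems defining a fibration of crossed complexes are exactly box-filling problems in which every face of the box is an iterated degeneracy of a vertex, so they are special cases of the Kan condition, with uniqueness giving the covering case. The genuine gap is in the converse, and it is concentrated in the sentence ``up to the folding operations \ldots\ $d$ represents an element of $\gamma(H)_n$''. The cube $d=(-_i\,p(t))\circ_i h$ has $\partial_i^-d=\partial_i^+p(t)$ and $\partial_i^+d=\partial_i^+h$, neither of which is degenerate in general, and its faces in directions $j\neq i$ are degenerate ($\epsilon_{i'}$ of an $(n-1)$-cube) but not concentrated at a vertex. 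So $d$ is \emph{not} an element of $\gamma(H)_n(z)$ for any vertex $z$, and the fibration hypothesis on $\gamma(p)$ cannot be applied to it as stated. To make your plan work you must (a) invoke the reconstruction results of \cite{BH81:algcub} --- an element of $G_n$ is determined by its faces together with its fold $\Phi$, and a box together with a compatible $\xi\in(\gamma G)_n$ has a unique filler with fold $\xi$ --- fold $d$ to $\Phi d$, lift that, and unfold against the prescribed faces; and (b) arrange that the lift $\xi$ of $\Phi d$ has the \emph{specific} boundary $\delta\xi$ dictated by those prescribed faces. Point (b) is automatic for a covering morphism (injectivity of $p$ on $\gamma(G)_{n-1}(w)$ forces it), but for a mere fibration surjectivity of $p_n$ only yields \emph{some} lift of $\Phi d$, and correcting it to one with the required $\delta$-boundary needs a further argument about $\delta(\Ker p_n)$ that your sketch does not supply. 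Until (a) and (b) are in place the recomposition $g=t\circ_i\tilde d$ cannot even be formed, so the ``routine but delicate calculation'' is in fact the entire content of this direction.

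For comparison, the paper sidesteps all of this by a formal three-step reduction: the Kan lifting condition on underlying cubical sets is, via the freeness adjunction for $\rho$, equivalent to the right lifting property in \ogpd\ against $\rho J^n_{\eps,i}\to\rho I^n$; under the equivalence with $\Crs$ this becomes the lifting property against $\Pi J^n_{\eps,i}\to\Pi I^n$; and the identification of that lifting property with the definition of fibration (covering morphism) of crossed complexes is quoted as a modification of the simplicial argument of \cite{BH91}. Your thin-filler-and-difference computation is essentially an attempt to prove that last quoted step by hand; it is a legitimate alternative route, but it is only complete once the folding machinery and the boundary-matching issue above are treated explicitly.
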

\begin{proof}Let $J^n_{\eps,i}$ for $\eps=\pm, i=1,\ldots,n$,  be
the subcubical set  of the cubical set $I^n$ generated by all faces
of $I^n$ except $\partial^\eps_i$.

We consider the following diagrams:

$$\def\labelstyle{\textstyle}
\xymatrix{ \Pi J^n_{\eps,i} \ar [d] \ar [r] & \gamma G \ar [d]^{\gamma(p)} \\
\Pi I^n \ar @{.>}[ur] \ar [r] & \gamma H \\
\ar@{}[r] ^{(i)}&}\qquad\xymatrix{ \rho J^n_{\eps,i}
\ar [d] \ar [r] &  G \ar [d]^{p} \\
\rho I^n \ar @{.>}[ur] \ar [r] &  H \\ \ar@{}[r] ^{(ii)}&}
\qquad\xymatrix{ J^n_{\eps,i}
\ar [d] \ar [r] &  UG \ar [d]^{Up} \\
 I^n \ar @{.>}[ur] \ar [r] &  UH \\ \ar@{}[r] ^{(iii)}&}
$$
By a simple modification of the simplicial argument in \cite{BH91},
we find that the condition that diagrams of the first type  have the
completion shown by the dotted arrow is  necessary and sufficient
for $\gamma p$ to be a fibration of crossed complexes (with
uniqueness for a covering morphism). In the second diagram, $\rho(K)
$ is the free cubical $\omega$-groupoid on the cubical set $K$, and
the equivalence of the first and the second diagram is one of the
results of \cite[Section 9]{BH81:col}. Finally, the equivalence with
the third diagram, in which $U$ gives the underlying cubical set,
follows from freeness of $\rho$.
\end{proof}

\begin{cor} \label{cor:pullbackprescolimits}Let $p\colon  K \to L$ be a morphism of \ogpd s such that
the underlying map of cubical sets is a Kan fibration. Then the
pullback functor $$f^*\colon  \ogpdm/L \to \ogpdm/K$$ has a right
adjoint and so preserves colimits.
\end{cor}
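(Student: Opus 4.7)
The plan is to deduce the corollary by transporting Howie's theorem from $\Crs$ to $\ogpdm$ along the Brown--Higgins equivalence $\gamma\colon \ogpdm \to \Crs$, using the characterisation of fibrations in Theorem \ref{fibration}.

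First, since the underlying cubical map of $p$ is a Kan fibration, Theorem \ref{fibration} tells us that $\gamma(p)\colon \gamma(K) \to \gamma(L)$ is a fibration of crossed complexes. By the result of Howie recalled in the Introduction, the pullback functor
\[
\gamma(p)^{*}\colon \Crs/\gamma(L) \longrightarrow \Crs/\gamma(K)
\]
admits a right adjoint, and in particular preserves colimits.

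Next, since $\gamma$ is an equivalence of categories, for any $\omega$-groupoid $M$ the induced functor on slices $\gamma/M\colon \ogpdm/M \to \Crs/\gamma(M)$ is an equivalence of categories. Equivalences preserve all limits and colimits, so pullbacks in $\ogpdm$ correspond under $\gamma$ to pullbacks in $\Crs$. Applying this with $M=K$ and $M=L$, we obtain a commutative (up to natural isomorphism) square
\[
\xymatrix{
\ogpdm/L \ar[r]^{p^{*}} \ar[d]_{\gamma/L} & \ogpdm/K \ar[d]^{\gamma/K}\\
\Crs/\gamma(L) \ar[r]_{\gamma(p)^{*}} & \Crs/\gamma(K)
}
\]
in which the vertical functors are equivalences. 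Transporting the right adjoint of $\gamma(p)^{*}$ through these equivalences yields a right adjoint for $p^{*}$, and hence $p^{*}$ preserves colimits.

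I expect no serious obstacle: the two non-routine ingredients, Howie's theorem and the equivalence of categories, are already in the literature and have been invoked here. The only thing to check is the compatibility of $\gamma$ with the pullback construction, which is automatic because $\gamma$ is an equivalence and pullback is a limit; once that is noted the adjunction transports formally.
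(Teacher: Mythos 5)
Your proposal is correct and is essentially the paper's own argument: the paper's proof reads simply ``immediate from Theorem \ref{fibration} and the main result of Howie,'' which is exactly the combination you use, with the transport of the adjunction along the slice equivalences induced by $\gamma$ left implicit there and spelled out by you. No further comment is needed.
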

\begin{proof} This is immediate from Theorem \ref{fibration} and
the main result of Howie \cite{How79}.
\end{proof}
\begin{cor}\label{cor:coveringoffree}
A covering crossed complex of a free crossed complex is also free.
\end{cor}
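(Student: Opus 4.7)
The plan is to decompose the free crossed complex $C$ as a sequential colimit of free cells along its skeleta, and then to apply the fact that pullback along a covering morphism preserves colimits to obtain a corresponding cellular description of $\widetilde{C}$.

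Set $F_n := \gamma(\bI^n) = \Pi I^n_*$, the free crossed complex on one $n$-dimensional generator (the image under the equivalence of Section 4 of the free $\omega$-groupoid on $c_n$), and let $\partial F_n \hookrightarrow F_n$ denote its boundary subcomplex. Then a free crossed complex $C$ with basis $B = \bigsqcup_n B_n$ can be written as $C = \colim_n C^{(n)}$ with $C^{(0)} = C_0$ discrete and each step $C^{(n-1)} \hookrightarrow C^{(n)}$ a pushout of the inclusion $\bigsqcup_{b \in B_n} \partial F_n \hookrightarrow \bigsqcup_{b \in B_n} F_n$ along the attaching map into $C^{(n-1)}$.

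Next, set $\widetilde{C}^{(n)} := \widetilde{C} \times_C C^{(n)}$. By the equivalence between $\Crs$ and $\ogpdm$ (which preserves colimits), together with Theorem \ref{fibration} (matching coverings on both sides) and Corollary \ref{cor:pullbackprescolimits}, the pullback functor $p^* \colon \Crs/C \to \Crs/\widetilde{C}$ preserves colimits. Hence $\widetilde{C} = \colim_n \widetilde{C}^{(n)}$ and each $\widetilde{C}^{(n-1)} \hookrightarrow \widetilde{C}^{(n)}$ is a pushout of $\bigsqcup_b \partial F_n \times_C \widetilde{C} \hookrightarrow \bigsqcup_b F_n \times_C \widetilde{C}$ along the lifted attaching maps.

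The key observation is that $F_n$ is connected and simply connected, since $I^n$ is contractible. Hence by Corollary \ref{cor:pullbackcrs} (or directly from Proposition \ref{prop:II4liftmorphcov}), the pullback $F_n \times_C \widetilde{C}$ is a disjoint union of copies of $F_n$, one for each point in the fiber of $p$ over the attaching basepoint. Pulling the inclusion $\partial F_n \hookrightarrow F_n$ back over each such copy yields a copy of the original inclusion, so $\partial F_n \times_C \widetilde{C} \hookrightarrow F_n \times_C \widetilde{C}$ is literally a disjoint union of copies of $\partial F_n \hookrightarrow F_n$. Consequently $\widetilde{C}^{(n-1)} \hookrightarrow \widetilde{C}^{(n)}$ is a genuine attachment of free $n$-cells, one for each element of $\widetilde{B}_n := p^{-1}(B_n) \subseteq \widetilde{C}_n$, and induction on $n$ shows that $\widetilde{C}$ is free on the basis $\widetilde{B} := \bigsqcup_n \widetilde{B}_n$.

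The main obstacle is ensuring that the cellular pushouts characterizing a free crossed complex really do pull back to pushouts of the same cellular form, not merely to abstract pushouts of crossed complexes. The two ingredients that make this work are the preservation of colimits by $p^*$ (transferred from Corollary \ref{cor:pullbackprescolimits} via Theorem \ref{fibration} and the equivalence of categories) and the triviality of coverings over the simply connected cell $F_n$ (via Proposition \ref{prop:II4liftmorphcov} or Corollary \ref{cor:pullbackcrs}). Once these are combined, the induction on skeleta is routine and exhibits the basis of $\widetilde{C}$ explicitly as the preimage of the basis of $C$.
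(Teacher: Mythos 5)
Your argument is correct and is essentially the paper's own proof written out in full: the paper merely remarks that a free crossed complex is a sequence of pushouts of cells and leaves the reader to combine this with the colimit-preservation of $p^*$ (Corollary \ref{cor:pullbackprescolimits}, via Howie's theorem) and the triviality of coverings over connected, simply connected cells (Proposition \ref{prop:II4liftmorphcov}), exactly as you do. One small correction: $\gamma(\bI^n)=\Pi I^n_*$ is not the free crossed complex on a single $n$-dimensional generator (that is $\Pi E^n_*$, as the paper notes in the remark closing Section 5), but since the pair $(\Pi I^n_*,\Pi\partial I^n_*)$ is still relatively free on one generator in dimension $n$, your cell attachments add exactly one free generator per cell and the argument is unaffected.
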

\begin{proof}
A free crossed complex is given by a sequence of pushouts,
analogously to the definition of CW-complexes,  see \cite{BH91,BHS}.
\end{proof}

\section{Dense subcategories}\label{sec:dense}
Our aim in this section is to explain and prove the theorem:

\begin{thm}\label{thm:dense}
The full subcategory $\mathscr{I}$ of \ogpd\ on the objects $ \bI^n$
is dense in \ogpd.
\end{thm}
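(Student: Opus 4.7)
The plan is to recast density as full faithfulness of the restricted Yoneda (nerve) functor
\[ N\colon \ogpdm \longrightarrow [\mathscr{I}^{\mathrm{op}}, \Set], \qquad N(G)(\bI^n) = \ogpdm(\bI^n, G) \cong G_n, \]
using the fact that morphisms $\bI^m \to \bI^n$ in $\mathscr{I}$ correspond bijectively to elements of $(\bI^n)_m$ via $b \mapsto b(c_m)$. Faithfulness is immediate: a morphism $f\colon G \to H$ of \ogpd s is determined by its components $G_n \to H_n$, and these are exactly what $N(f)$ records. The content lies entirely in showing $N$ is full.

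Given a natural transformation $\alpha\colon N(G) \to N(H)$, I would define $\phi_n\colon G_n \to H_n$ by $\phi_n(x) = \alpha_{\bI^n}(\widehat{x})(c_n)$, where $\widehat{x}\colon \bI^n \to G$ is the unique morphism sending $c_n$ to $x$. Naturality of $\alpha$ with respect to a morphism $\bI^m \to \bI^n$ rewrites, upon evaluation at $c_m$, as the identity $\phi_m(\widehat{x}(b)) = \widehat{\phi_n(x)}(b)$ for every $b \in (\bI^n)_m$. Specialising this to the elements $\partial_i^{\pm} c_n$, $\varepsilon_i c_n$, $\Gamma_i^{\pm} c_n \in \bI^n$ yields at once that $\phi$ commutes with the face, degeneracy and connection operations; so $\phi$ is a morphism of cubical sets with connection.

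The main obstacle is preservation of the groupoid compositions $\circ_i$. An element $x \circ_i y \in G_n$ involves two $n$-cubes in $G$ with a matching face and is \emph{not} of the form $\widehat{x}(b)$ for any single $b \in \bI^n$, so naturality alone cannot reach it. To bypass this I would invoke the equivalence of $\omega$-groupoids with Dakin $T$-complexes of \cite{BH81:T}: on a cubical set with connection the $n$ composition operations are determined by the thin structure, so any map of cubical-sets-with-connection that preserves thin elements is automatically a morphism of \ogpd s. It therefore suffices to show that $\phi$ preserves thin elements.

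This last step is precisely what Lemma \ref{lem:thinmorphism} was designed for. Given thin $t \in G_n$ the lemma supplies a thin $b_t \in \bI^n$ with $\widehat{t}(b_t) = t$, and naturality at this $b_t$ gives
\[ \phi(t) = \phi(\widehat{t}(b_t)) = \widehat{\phi(t)}(b_t). \]
Since $b_t$ is thin and $\widehat{\phi(t)}\colon \bI^n \to H$ is a morphism of \ogpd s (hence preserves thin elements), $\phi(t)$ is thin. Thus $\phi$ is a thinness-preserving map of cubical sets with connection, hence by the $T$-complex characterisation a genuine morphism of \ogpd s. This establishes fullness of $N$ and so density of $\mathscr{I}$.
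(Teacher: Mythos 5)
Your proposal is correct and follows essentially the same route as the paper's own proof: define the candidate morphism by $f(x)=\mathsf{f}(\hat{x})(c_n)$, use naturality against the morphisms of $\mathscr{I}$ classified by $\partial_i^{\pm}c_n$, $\varepsilon_i c_n$, $\Gamma_i c_n$ to get a map of cubical sets with connection, then use Lemma \ref{lem:thinmorphism} to show thin elements are preserved and invoke the $T$-complex equivalence of \cite{BH81:T} to recover preservation of the compositions $\circ_i$. Your explicit remark on why naturality alone cannot reach $x\circ_i y$ is a helpful gloss on the same argument, not a departure from it.
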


We recall from \cite{Mac71} the definition of a dense subcategory.
First, in any category $\C$, a morphism $f\colon  C \to D$ induces a
natural transformation $f_*\colon  \C(-, C) \Rightarrow \C(-,D)$ of
functors $\C^{op} \to \Set$. Conversely, any such natural
transformation is induced by a (unique) morphism $C \to D$.

If $\sI$ is a subcategory of $\C$, then  each object $C$ of $\C$
gives a functor
$$\C^{|\sI}(-,C) \colon\sI^{op} \to \Set$$
and a morphism $f\colon C \to D$ of $\C$ induces a natural
transformation of functors $f_*\colon \C^{|\sI}(-,C) \Rightarrow
\C^{|\sI}(-,D)$. The subcategory $\sI$ is {\it dense} in $\C$ if
every such natural transformation arises from a morphism. More
precisely, there is a functor $\eta\colon  \C \to
\CAT(\sI^{op},\Set)$ defined in the above way, and $\sI$ is {\it
dense} in $\C$ if $\eta$ is full and faithful. \begin{example}
Consider the Yoneda embedding
$$\Upsilon\colon  \C \to \C^{op}\mbox{-}\Set= \CAT(\C^{op},\Set)$$
where $\C$ is a small category. Then each object $K \in
\C^{op}\mbox{-}\Set$ is a colimit of objects in the image of
$\Upsilon$ and this is conveniently expressed in terms of coends as
that  the natural morphism
$$\int^c\left(\C^{op}\mbox{-}\Set(\Upsilon c, K) \times \Upsilon c\right) \quad\to \quad K$$ is
an isomorphism. Thus the Yoneda image of $\C$ is dense in
$\C^{op}\mbox{-}\Set$. For more on the relation between density and
the Yoneda Lemma, see \cite{pratt-yoneda}.  \hfill $\Box$
\end{example}
\begin{example}
Let $\bZ$ be the cyclic group of integers. Then  $\{\bZ\}$ is a
generating  set for the category $\Ab$ of abelian groups, but the
full subcategory of $\Ab$ on this set is not dense in $\Ab$. In
order for a natural transformation to specify not just a function
$f\colon  A \to B$ but  a morphism in $\Ab$, we have  to enlarge
this to a full subcategory including $\bZ \oplus \bZ $.   \hfill
$\Box$
\end{example}

\noindent {\bf Proof of Theorem \ref{thm:dense}} We will use the
main result of \cite{BH81:T}, that the compositions in a cubical
$\omega$-groupoid are determined by its thin elements.

Let $G,H$ be $\omega$--groupoids and let $\mathsf{f}\colon
\ogpdm_\sI(-,G) \to \ogpdm_\sI(-,H)$ be a natural transformation. We
define $f\colon G \to H$ as follows.

Let $x \in G_n$. Then $x$ defines $\hat{x}\colon  \bI^n \to G$. We
set $f(x)= \mathsf{f}(\hat{x})(c^n) \in H_n$. We have to prove $f$
preserves all the structure.

For example, we prove that $f(\partial ^\pm_i x)= \partial
^\pm_if(x)$. Let $\bar{\partial}^\pm_i\colon  \bI^{n-1} \to \bI^n$
be given by having value $\partial^\pm_i c^n$ on $c^{n-1}$. The
natural transformation condition implies that
$\mathsf{f}(\bar{\partial}^\pm_i)^*=
(\bar{\partial}^\pm_i)^*\mathsf{f}$. On evaluating this on $\hat{x}$
we obtain $f(\partial ^\pm_i x)=\partial ^\pm_if(x)$ as required. In
a similar way, we prove that $f$ preserves the operations
$\epsilon_i, \Gamma_i$.

Now suppose that $t \in G_n$ is thin in $G$. We prove that $f(t)$ is
thin in $H$. By Lemma \ref{lem:thinmorphism}, there is a thin
element $b_t\in \bI^n$ such that $\hat{t}(b_t)=t$. Let
$\bar{b}\colon \bI^n \to \bI^n$ be the unique morphism such that
$\bar{b}(c^n)= b_t$. Then the natural transformation condition
implies $f(t)=\mathsf{f}(\hat{t})(c^n)=\mathsf{f}(\hat{t})(b_t)$.
Since $b_t$ is thin, it follows that $f(t)$is thin. Thus $f$
preserves the thin structure.

The main result of \cite{BH81:T} now implies that the operations
$\circ_i$ are preserved by $f$. \hfill $\Box$

We can also conveniently represent each $\omega$--groupoid as a
coend.
\begin{cor}\label{cor:omegagroupoidascolimit}
The subcategory  $ \sI $ of $\ogpdm$ is dense and for each object
$G$ of \ogpd\ the natural morphism
\begin{equation*}
  \int ^n \ogpdm(\bI^n,G) \times \bI^n \to G
\end{equation*}
is an isomorphism.
\end{cor}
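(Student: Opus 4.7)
The plan is to deduce the coend representation from the density statement of Theorem \ref{thm:dense} by a Yoneda argument; density is the substantive input, and the coend isomorphism then follows by a formal manipulation of representable functors.

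For any $\omega$-groupoid $H$, the defining universal property of the coend, combined with the fact that the copower of $\bI^n$ by a set $S$ (i.e.\ the $S$-indexed coproduct) satisfies $\ogpdm(S\times \bI^n, H) \cong \Set(S, \ogpdm(\bI^n, H))$, yields a natural bijection
\begin{equation*}
\ogpdm\!\left(\int^n \ogpdm(\bI^n, G) \times \bI^n,\; H\right) \;\cong\; \int_n \Set\bigl(\ogpdm(\bI^n, G),\; \ogpdm(\bI^n, H)\bigr),
\end{equation*}
and the end on the right is, by definition, the set $\Nat\bigl(\ogpdm^{|\sI}(-,G),\, \ogpdm^{|\sI}(-,H)\bigr)$ of natural transformations in $\CAT(\sI^{op}, \Set)$. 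Theorem \ref{thm:dense} asserts that the comparison functor $\eta \colon \ogpd \to \CAT(\sI^{op}, \Set)$ is full and faithful, so this last set is in canonical bijection with $\ogpdm(G, H)$. Writing $K$ for the coend $\int^n \ogpdm(\bI^n, G) \times \bI^n$, the composite of these bijections gives $\ogpdm(K, H) \cong \ogpdm(G, H)$ natural in $H$, and the Yoneda lemma then produces an isomorphism $K \cong G$ in $\ogpd$.

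The only point that is not automatic from this abstract chain of bijections is the verification that the Yoneda-produced isomorphism coincides with the canonical evaluation morphism $K \to G$ given on representatives by $(\hat{x}, c) \mapsto \hat{x}(c)$, which is the morphism asserted in the statement. This is the main—and really the only—bookkeeping obstacle, though a mild one: it is carried out by tracing the identity of $G$ in $\ogpdm(G,G)$ backwards through the two bijections, whereupon it unwinds to precisely the evaluation map. Hence the canonical morphism is itself the isomorphism.
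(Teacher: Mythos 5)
Your argument is correct and is precisely the ``standard consequence'' that the paper invokes in its one-line proof: you deduce the coend formula from the density of $\sI$ (Theorem \ref{thm:dense}) via the usual chain $\ogpdm(K,H)\cong \int_n \Set(\ogpdm(\bI^n,G),\ogpdm(\bI^n,H))\cong \Nat(\eta G,\eta H)\cong \ogpdm(G,H)$ and Yoneda, together with the check that the resulting isomorphism is the evaluation morphism. You have simply written out the details the paper leaves implicit; nothing further is needed.
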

\begin{proof}
  This is a standard consequence of the property of $\sI$ being dense.
\end{proof}\begin{cor}\label{cor:densecrossedcompl}
The full subcategory of $\Crs$ generated by the objects $\Pi I^n_*$
is dense in $\Crs$.
\end{cor}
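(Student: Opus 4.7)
My plan is to deduce the corollary formally from Theorem \ref{thm:dense} via the equivalence of categories between $\ogpdm$ and $\Crs$. First, I would invoke \cite[Theorem 5.1]{BH81:col} (quoted in Section 4 of the excerpt), which says that this equivalence sends $\rho X_*$ to $\Pi X_*$; specialising to $X_* = I^n_*$ and noting that $\bI^n = \rho I^n_*$, the equivalence identifies $\bI^n$ with $\Pi I^n_*$. Consequently the full subcategory of $\Crs$ on the objects $\Pi I^n_*$ is, up to natural isomorphism, the essential image of the dense subcategory $\sI$ of $\ogpdm$.

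The central step is the general fact that density is invariant under equivalence of categories: if $\mathcal{J} \hookrightarrow \mathcal{A}$ is dense and $F \colon \mathcal{A} \to \mathcal{B}$ is an equivalence with quasi-inverse $G$, then $F(\mathcal{J}) \hookrightarrow \mathcal{B}$ is dense. To see this, take any objects $B, B'$ of $\mathcal{B}$ and any natural transformation
$$\mathsf{f} \colon \mathcal{B}^{|F(\mathcal{J})}(-, B) \Rightarrow \mathcal{B}^{|F(\mathcal{J})}(-, B').$$
Precomposition along $F \colon \mathcal{J} \to F(\mathcal{J})$, combined with the full faithfulness of $F$ applied to the targets $B, B'$ and to the isomorphisms $FG B \cong B$, $FG B' \cong B'$, yields a natural transformation $\mathcal{A}^{|\mathcal{J}}(-, GB) \Rightarrow \mathcal{A}^{|\mathcal{J}}(-, GB')$. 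Density of $\mathcal{J}$ represents this uniquely by a morphism $GB \to GB'$, which transports back via $F$ to a morphism $B \to B'$ representing $\mathsf{f}$, and the correspondence is bijective.

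Applying this observation to $\gamma \colon \ogpdm \to \Crs$ and the dense subcategory $\sI$ of Theorem \ref{thm:dense}, together with the identification $\gamma(\bI^n) \cong \Pi I^n_*$, yields the corollary. I anticipate no substantive obstacle: the only content beyond Theorem \ref{thm:dense} is the routine verification that the bijection constructed above coincides with the comparison map $\eta \colon \Crs(B,B') \to \Nat(\Crs^{|F(\sI)}(-, B), \Crs^{|F(\sI)}(-, B'))$ appearing in the definition of density, which is a direct diagram chase using only that $F$ is fully faithful.
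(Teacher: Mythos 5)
Your proposal is correct and follows essentially the same route as the paper, whose entire proof is the one-line observation that the equivalence $\gamma\colon\ogpdm\to\Crs$ carries $\bI^n$ to $\Pi I^n_*$ by \cite[Theorem 5.1]{BH81:col}, leaving the transfer of density along an equivalence implicit. Your explicit verification of that transfer is a correct elaboration of the same argument rather than a different approach.
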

\begin{proof}
This follows from the fact that the equivalence $\gamma\colon \ogpdm
\to \Crs$ takes $\bI^n$ to $\Pi I^n_*$, \cite[Theorem
5.1]{BH81:col}.
\end{proof}
\begin{rem}
The paper \cite{BH81:inf} gives an equivalence between the category
$\Crs$ of crossed complexes and the category there called
$\infty$-groupoids and now commonly called globular
$\omega$-groupoids. Thus the above Corollary yields also a dense
subcategory, based on models of cubes, in  the latter category.
\hfill $\Box$
\end{rem}
\begin{rem}
It is easy to find a generating set of objects for the category
$\Crs$, namely the free crossed complexes on single elements, given
in fact by $\Pi E^n_*$, where $E^n_*$ is the usual cell
decomposition of the unit ball, with one cell for $n=0$ and
otherwise three cells. It is not so obvious how to construct
directly from this generating set a dense subcategory closed under
tensor products. \hfill $\Box$
\end{rem}

%\newpage

\section{The tensor product of covering morphisms}\label{sec:tensor}

Our aim is to prove the  following:
\begin{thm}\label{thm:IIItenscov}
The tensor product of two covering morphisms of crossed complexes is
a covering morphism.
\end{thm}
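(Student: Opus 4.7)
My plan is to pass to the equivalent category $\ogpdm$ of cubical $\omega$-groupoids, where the tensor product is primarily defined and where the density theorem applies. By Theorem~\ref{fibration} it suffices to show that the underlying map of cubical sets of $p \otimes q$ is a Kan covering. Since covering morphisms compose and $p \otimes q = (1_C \otimes q) \circ (p \otimes 1_{\wt D})$, it is enough to prove separately that $p \otimes 1_X$ and $1_X \otimes q$ are coverings for an arbitrary $\omega$-groupoid $X$; by symmetry I focus on the first.

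By Corollary~\ref{cor:pullbackcrs} applied to the dense generators $\bI^k$ (each of which is connected and simply connected, since under the equivalence with $\Crs$ it corresponds to $\Pi I^k_*$ of the contractible ball $I^k$), it suffices to show that for every $k \ge 0$ the evaluation square
\[
\xymatrix{\ogpdm(\bI^k, \wt C \otimes X) \times \bI^k \ar[r]^-\epsilon \ar[d] & \wt C \otimes X \ar[d]^{p \otimes 1_X} \\ \ogpdm(\bI^k, C \otimes X) \times \bI^k \ar[r]^-\epsilon & C \otimes X}
\]
is a pullback. Combining Corollary~\ref{cor:omegagroupoidascolimit} with the closed monoidal structure (so that the tensor preserves colimits in each variable) and with the identification $\bI^m \otimes \bI^n \cong \bI^{m+n}$, I obtain
\[
\wt C \otimes X \;\cong\; \int^{m,n} \ogpdm(\bI^m, \wt C) \times \ogpdm(\bI^n, X) \cdot \bI^{m+n},
\]
together with an analogous expression for $C \otimes X$; the morphism $p \otimes 1_X$ is then induced by $p_*$ on the first indexing factor and by the identity on the second.

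For each summand $\phi_{f,g}: \bI^{m+n} \to C \otimes X$ coming from a pair $(f,g) \in \ogpdm(\bI^m, C) \times \ogpdm(\bI^n, X)$, Corollary~\ref{cor:pullbackcrs} applied to $p$ with $F = \bI^m$ says that lifts of $f$ along $p$ are in bijection with lifts of the base point $f(x_m) \in C_0$ to $\wt C_0$. Hence the fibre of $p \otimes 1_X$ over this summand is a disjoint union of copies of $\bI^{m+n}$ indexed by such base-point lifts, i.e.\ a trivial covering of $\bI^{m+n}$. To glue these local coverings into the global square above I invoke Corollary~\ref{cor:pullbackprescolimits}: since $p$ is a covering, $p^*$ preserves colimits and so commutes with the coend, assembling the pointwise pullback description into the pullback of the coends.

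The main obstacle I anticipate is exactly this interchange step, because a general cube-map $\psi : \bI^k \to C \otimes X$ is not itself a covering, so its own pullback functor need not preserve colimits. The remedy is to route the argument through $p$ rather than through $\psi$: the coend defining $\wt C \otimes X$ can be obtained by pulling back the coend defining $C \otimes X$ along $p$ on the first indexing factor, where Corollary~\ref{cor:pullbackprescolimits} applies directly. The uniqueness of thin fillers used in the proof of Theorem~\ref{thm:dense}, together with density, then ensures that unique lifting for morphisms out of each $\bI^k$ is equivalent to the full Kan-covering condition on horns, completing the translation back to $\Crs$.
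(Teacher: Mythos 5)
Your reduction to $p\otimes 1_X$, your use of the coend formula coming from density, and your appeal to colimit-preservation of pullback along coverings are all the right ingredients, and they are the same ingredients the paper uses. But the central step does not close as written. Corollary \ref{cor:pullbackprescolimits} lets you apply $q^*$ to a colimit only when $q$ is a morphism \emph{already known} to be a Kan fibration, and the only candidate covering in sight over $C\otimes X$ is $p\otimes 1_X$ itself --- precisely what you are trying to prove is a covering. You recognise this circularity, but your remedy (``pull back the coend along $p$ on the first indexing factor'') is not a well-defined operation: $p^*$ is a functor $\ogpdm/C\to\ogpdm/\wt{C}$, whereas the coend defining $C\otimes X$ lives over $C\otimes X$, and you never exhibit the morphism over $C$ along which Corollary \ref{cor:pullbackprescolimits} is to be applied. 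The paper escapes the circle by a pivot that is absent from your argument: using the classification of connected coverings by subgroups of $\pi_1$ (Theorem \ref{thm:II5-equivof categories} and the remarks following it), it first \emph{constructs} a covering $\phi\colon C\to G\otimes H$ corresponding to the subgroup $M\times\pi_1(H,y)$ of $\pi_1(G\otimes H,(x,y))$; since $\phi$ is a covering by construction, $\phi^*$ legitimately commutes with the coend, and $\phi^*$ of each summand $\ogpdm(\bI^m,G)\times\ogpdm(\bI^n,H)\times\bI^{m+n}$ is identified, via the choice of subgroup, with $\ogpdm(\bI^m,\wt{G})\times\ogpdm(\bI^n,H)\times\bI^{m+n}$. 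This yields $C\cong\wt{G}\otimes H$ over $G\otimes H$, whence $p\otimes 1_H$ is a covering because it is isomorphic to one.

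A secondary gap: you invoke Corollary \ref{cor:pullbackcrs} in the converse direction, asserting that if the evaluation squares for all $\bI^k$ are pullbacks then $p\otimes 1_X$ is a covering. The corollary as stated only gives the forward implication (covering $\Rightarrow$ pullback). The converse is plausible --- taking $F=\bI^1$ and a vertex recovers unique costar lifting, and higher cubes recover bijectivity in higher dimensions --- but it is a separate claim needing its own proof, and your closing sentence about thin fillers and Kan horns does not supply one. If you want to salvage your route without the paper's classification step, the cleanest fix is to use the augmentation $C\otimes X\to C\otimes\mathbf{1}\cong C$ to regard $C\otimes X$ as an object of $\ogpdm/C$, show $p^*(C\otimes X)\cong\wt{C}\otimes X$ by applying Corollary \ref{cor:pullbackprescolimits} to $p$ and Corollary \ref{cor:pullbackcrs} to each summand, and then conclude since coverings are stable under pullback; but that argument, too, must be set up explicitly in the correct slice category.
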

\begin{rem}
The reason why we have to give  an indirect proof of this result is
that the definition of  covering morphism involves {\it elements} of
crossed complexes; but it is difficult to specify exactly the
elements of a tensor product whose definition is perforce by
generators and relations. \hfill $\Box$
\end{rem}
It is sufficient to assume that all the crossed complexes involved
are connected. We will also work in the category of $\omega$--groupoids, and prove the following:

\begin{thm} \label{thm:covomeg}
Let $G,H$ be connected $\omega$--groupoids with base points $x,y$
respectively, and let $p: \tilde{G} \to G$ be the covering morphism
determined by the subgroup $M$ of $\pi_1(G,x)$. Let $\phi: C \to G
\otimes H$ be the covering morphism determined by the subgroup $M
\times \pi_1(H,y)$ of $$ \pi_1(G \otimes H,(x,y))\cong \pi_1(G,x)
\times \pi_1(H,y).$$ Then there is an isomorphism $\psi: C \to
\tilde{G} \otimes H$ such that $(p\otimes 1_H) \psi= \phi$, and,
consequently,
$$p\otimes 1_H: \tilde{G} \otimes H \to G \otimes H$$ is a covering
morphism.
\end{thm}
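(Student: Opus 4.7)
The plan is to first establish that $p\otimes 1_H\colon\tilde G\otimes H\to G\otimes H$ is itself a covering morphism of $\omega$-groupoids; the isomorphism $\psi$ then follows from the classification of covering morphisms over $G\otimes H$.

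To prove $p\otimes 1_H$ is a covering morphism, I would appeal to Theorem \ref{fibration} and reduce the question to verifying that the underlying map of cubical sets is a Kan covering. The main tool is the density of $\sI=\{\bI^n\mid n\geq 0\}$ in $\ogpd$ (Theorem \ref{thm:dense}) combined with Corollary \ref{cor:omegagroupoidascolimit}. Because the monoidal closed structure makes both $-\otimes H$ and $G\otimes -$ left adjoints, they preserve colimits; combined with $\bI^m\otimes\bI^n\cong\bI^{m+n}$ this yields coend representations
\[
  G\otimes H \;\cong\; \int^{m,n} G_m\times H_n\times\bI^{m+n}, \qquad \tilde G\otimes H \;\cong\; \int^{m,n} \tilde G_m\times H_n\times\bI^{m+n},
\]
with $p\otimes 1_H$ realised as the map induced by $p_*\colon\tilde G_m\to G_m$ in the first factor. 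Thus any cell of $G\otimes H$ is represented by a pair $(g,h)\in G_m\times H_n$ modulo coend relations, and, once a basepoint lift is fixed, its lift to $\tilde G\otimes H$ is obtained by replacing $g$ with the unique $\tilde g\in\tilde G_m$ supplied by Corollary \ref{cor:pullbackcrs} applied with the simply connected $F=\bI^m$; the $H$-factor is left untouched.

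The hard part will be confirming that this prescription yields a well-defined and uniquely determined Kan filler of every box in $G\otimes H$---that is, that the assignment $(g,h)\mapsto(\tilde g,h)$ respects the coend identifications and interacts correctly with the cubical face, degeneracy, connection and composition operations. The essential point is that all coend relations are generated by structural maps between cubes $\bI^k$, and such maps lift uniquely through $p$ by Corollary \ref{cor:pullbackcrs}; once this bookkeeping is in place, Theorem \ref{fibration} immediately yields that $p\otimes 1_H$ is a covering morphism.

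Finally, to produce $\psi$, an Eilenberg--Zilber-type computation as in Tonks \cite{tonks:EZ} gives $\pi_1(\tilde G\otimes H,(\tilde x,y))\cong\pi_1(\tilde G,\tilde x)\times\pi_1(H,y)=M\times\pi_1(H,y)$, which is precisely the characteristic subgroup of $\phi$. Hence $C$ and $\tilde G\otimes H$ are connected coverings of $G\otimes H$ with the same characteristic subgroup, and Theorem \ref{thm:II5-equivof categories}, transferred to $\ogpd$ via the equivalence with $\Crs$, supplies a unique isomorphism $\psi\colon C\to\tilde G\otimes H$ over $G\otimes H$ satisfying $(p\otimes 1_H)\psi=\phi$, as required.
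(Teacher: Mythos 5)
Your overall strategy is inverted relative to what actually works here, and the inversion is where the gap lies. You propose to \emph{first} show that $p\otimes 1_H$ is a covering morphism by verifying the unique box-filling condition on the underlying cubical sets, and only then to obtain $\psi$ from the classification of coverings. But the step you yourself flag as ``the hard part'' --- checking that the assignment $(g,h)\mapsto(\tilde g,h)$ respects the coend identifications and yields well-defined, unique fillers of boxes in $G\otimes H$ --- is precisely the obstacle the paper's own remark before Theorem \ref{thm:IIItenscov} identifies as the reason a direct proof is infeasible: the covering condition is a statement about \emph{elements}, and the elements of a tensor product defined by generators and relations (equivalently, by a coend) cannot be pinned down explicitly. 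Knowing the coend presentation of $G\otimes H$ does not give you a normal form for its cells, so there is no way to carry out the ``bookkeeping'' you defer; your argument as stated does not close. Your appeal to Corollary \ref{cor:pullbackcrs} with $F=\bI^m$ only lifts individual cells of $G$ and says nothing about compatibility with the coend relations in $G\otimes H$ itself.

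The paper's proof runs the logic the other way and thereby avoids the element-level verification entirely. It starts from the covering $\phi\colon C\to G\otimes H$ classified by $M\times\pi_1(H,y)$, which is \emph{already known} to be a covering morphism, hence (by Theorem \ref{fibration}) a Kan fibration, so that by Corollary \ref{cor:pullbackprescolimits} (Howie's theorem) the pullback functor $\phi^*$ preserves colimits. Applying $\phi^*$ to the coend presentation
$G\otimes H\cong\int^{m,n}\ogpdm(\bI^m,G)\times\ogpdm(\bI^n,H)\times(\bI^m\otimes\bI^n)$
and using the defining subgroup of $C$ to identify $\phi^*\bigl(\ogpdm(\bI^m,G)\times\ogpdm(\bI^n,H)\bigr)$ with $\ogpdm(\bI^m,\tilde G)\times\ogpdm(\bI^n,H)$, one reassembles the coend to get $C\cong\tilde G\otimes H$ over $G\otimes H$. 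The covering property of $p\otimes 1_H$ is then a \emph{consequence} of this isomorphism, not a prerequisite for it; no Eilenberg--Zilber computation of $\pi_1(\tilde G\otimes H)$ and no re-application of the classification theorem are needed. To repair your argument you should replace the direct verification and the final classification step by this single pullback-of-coends computation.
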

\noindent {\bf Proof} Here we were inspired by the formulae
 of Brian Day \cite{Day-thesis}.

First we know from \cite{BH87} that the tensor product of $\omega
$-Gpds satisfies $\mathbb{I}^{m}\otimes \mathbb{I}^{n}\cong
\mathbb{I}^{m+n}$, showing that $\mathcal{I}$ is a full monoidal
subcategory of $\omega $-Gpds. Since also from \cite{BH87} the
tensor preserves colimits in each variable, it follows from
Corollary \ref{cor:omegagroupoidascolimit}  that the tensor product
$G\otimes H$ of $\omega $-groupoids $G$ and $H$ satisfies
\begin{equation}
G \otimes H \cong \int^{m,n} \ogpdm(\bI^m,G) \times \ogpdm(\bI^n,H)
\times (\bI^m \otimes \bI^n) .
\end{equation}

Let $p \colon \tilde{G} \to G$ be the covering morphism determined
by the subgroup $M$ and let $\phi \colon C \to G \otimes H$ be the
covering morphism determined by  the subgroup $M \times \pi_1(H,y)$
of
$$\pi_1(G,x)\times \pi_1(H,y) \cong \pi_1(G \otimes H, (x,y)).$$  By Corollary \ref{cor:pullbackprescolimits},
pullback $\phi^*$ by $\phi$ preserves colimits. Hence
\begin{align*}
C &\cong \phi^*\left(\int^{m,n} \ogpdm(\bI^m,G) \times
\ogpdm(\bI^n,H)\times
 (\bI^m \otimes \bI^n)\right)\\
& \cong\int^{m,n}\phi^*(\ogpdm(\bI^m,G) \times \ogpdm(\bI^n,H))
\times
(\bI^m \otimes \bI^n)\\
\intertext{and so  because of the construction of $C$ by the
specified subgroup:} & \cong\int^{m,n}\ogpdm(\bI^m,\tilde{G})
\times\ogpdm(\bI^m,{H})\times(\bI^m \otimes \bI^n)\\
&\cong\tilde{G}\otimes {H}. \tag*{$\Box$}
\end{align*}
\begin{cor}
The tensor product of covering morphisms of $\omega$-groupoids is
again a covering morphism.
\end{cor}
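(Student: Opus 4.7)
The plan is to deduce this Corollary from Theorem \ref{thm:covomeg} by a simple factorization. Given covering morphisms $p\colon \tilde{G} \to G$ and $q\colon \tilde{H} \to H$ of $\omega$-groupoids, I write
$$p \otimes q \;=\; (p \otimes 1_H)\circ(1_{\tilde{G}} \otimes q)\colon\; \tilde{G}\otimes\tilde{H} \;\longrightarrow\; \tilde{G}\otimes H \;\longrightarrow\; G\otimes H.$$
The right-hand factor is precisely the kind of morphism handled by Theorem \ref{thm:covomeg}. The left-hand factor is of the same form after swapping the two tensorands: either invoke the symmetry of the monoidal structure on \ogpdm, or observe that the coend proof of Theorem \ref{thm:covomeg} is manifestly symmetric in $G$ and $H$, so that the analogous statement for $1_G\otimes q$ (and hence for $1_{\tilde{G}}\otimes q$) follows by the same argument applied to the subgroup $\pi_1(G,x)\times N$ of $\pi_1(G\otimes H,(x,y))$, where $N$ is the subgroup defining $q$.

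It remains to note that composites of covering morphisms of $\omega$-groupoids are covering morphisms. By Theorem \ref{fibration} this reduces to the easy fact that composites of covering maps of cubical sets are covering maps; equivalently, at the level of crossed complexes, costar bijections compose to costar bijections and dimensionwise bijections compose to dimensionwise bijections.

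The one genuine subtlety — and the only real obstacle I foresee — is that Theorem \ref{thm:covomeg} assumes the base $\omega$-groupoids are connected, whereas the intermediate object $\tilde{G}$ in our factorization may fail to be connected even when $G$ is. To handle this, we decompose $\tilde{G}=\bigsqcup_\alpha \tilde{G}_\alpha$ into connected components; since the tensor product of $\omega$-groupoids is part of a closed monoidal structure and therefore preserves colimits in each variable,
$$1_{\tilde{G}}\otimes q \;\cong\; \bigsqcup_\alpha \bigl(1_{\tilde{G}_\alpha}\otimes q\bigr).$$
Each summand is a covering morphism by the symmetric form of Theorem \ref{thm:covomeg}, and a disjoint union of covering morphisms is clearly a covering morphism, so the left-hand factor is a covering morphism as required. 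Combining this with the above yields the Corollary.
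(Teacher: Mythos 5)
Your proof is correct and follows essentially the same route as the paper's: reduce to the connected case via compatibility of the tensor product with disjoint unions, factor $p\otimes q$ as $(p\otimes 1)\circ(1\otimes q)$, and invoke Theorem \ref{thm:covomeg} together with closure of covering morphisms under composition. You spell out two points the paper leaves implicit --- the symmetry needed to handle the factor $1_{\tilde{G}}\otimes q$ and the possible disconnectedness of $\tilde{G}$ --- and both are handled correctly.
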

\begin{proof}
Because tensor product commutes with disjoint union, it is
sufficient to restrict to the connected case. Since the composition
of covering morphisms is again a covering morphism, it is sufficient
to restrict to the case of $p \otimes 1_H$, and that is proved  in
Theorem \ref{thm:covomeg}.
\end{proof}
The proof of Theorem \ref{thm:IIItenscov} follows immediately.

\begin{cor}\label{cor:freecrossedres}
If $F,F'$ are free and aspherical crossed complexes, then so also is
$F \otimes F'$.
\end{cor}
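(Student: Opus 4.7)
The plan is to prove the two clauses of the Corollary — freeness and asphericity of $F\otimes F'$ — separately, after first reducing to the case in which both $F$ and $F'$ are connected. This reduction is harmless since $\otimes$ distributes over disjoint unions and both freeness and asphericity are detected component-wise.

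For freeness, I would combine the cellular presentation of a free crossed complex as a colimit of pushouts attaching standard free generators of the form $\Pi E^n_*$ with the fact from \cite{BH87} that the tensor product preserves colimits in each variable. This reduces the question to showing that the tensor product of two such free generators is again free; one clean way to see this is to transport the computation to $\omega$-groupoids via the equivalence $\gamma$ and invoke the isomorphism $\bI^m\otimes\bI^n\cong\bI^{m+n}$ of equation \eqref{eq:tensomega cubes}. Gluing the resulting pushouts produces a free-crossed-complex basis for $F\otimes F'$.

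For asphericity, the covering-morphism machinery of Section \ref{sec:tensor} does the work. Let $\tilde F\to F$ and $\tilde F'\to F'$ be universal covering morphisms; by Corollary \ref{cor:coveringoffree} both $\tilde F$ and $\tilde F'$ are free, both are simply connected by construction, and both are aspherical, since asphericity of $F$ says exactly that the higher homology of its universal cover vanishes. By Theorem \ref{thm:IIItenscov} the induced morphism $\tilde F\otimes\tilde F'\to F\otimes F'$ is again a covering morphism, and since $\pi_1(F\otimes F')\cong\pi_1(F)\times\pi_1(F')$ while $\tilde F\otimes\tilde F'$ is connected and simply connected, this covering is the universal one.

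The remaining step — and the main obstacle I expect to encounter — is to show that $\tilde F\otimes\tilde F'$ is itself aspherical. My plan here is to pass to associated chain complexes with operators: a simply connected free aspherical crossed complex has associated chain complex which is a free resolution of $\mathbb{Z}$ concentrated in degree $0$, and in this simply connected setting the crossed-complex tensor product matches the ordinary tensor product of chain complexes in the range of dimensions relevant for homology. A standard K\"unneth argument for bounded-below free chain complexes then gives acyclicity of the tensor product in positive degrees, so $\tilde F\otimes\tilde F'$ is aspherical. Combined with the freeness paragraph, this completes the proof.
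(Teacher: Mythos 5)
Your overall skeleton agrees with the paper's: reduce to the connected case, get freeness of $F\otimes F'$ from the cellular/colimit description (the paper simply cites \cite[Cor. 5.2]{BH91} for this), pass to the universal covers $\tilde{F},\tilde{F'}$, which are free by Corollary \ref{cor:coveringoffree} and acyclic because asphericity is detected on the universal cover, and use Theorem \ref{thm:IIItenscov} to identify $\tilde{F}\otimes\tilde{F'}$ as the universal cover of $F\otimes F'$. Everything then hinges on showing that $\tilde{F}\otimes\tilde{F'}$ is acyclic, and it is exactly here that you diverge from the paper and where your argument has a genuine gap. Your claim that ``in this simply connected setting the crossed-complex tensor product matches the ordinary tensor product of chain complexes in the range of dimensions relevant for homology'' is the whole difficulty, not a routine step: in dimensions $\leq 2$ the tensor product $C\otimes D$ is genuinely nonabelian (for instance $(C\otimes D)_2$ receives generators $c\otimes d$ with $c\in C_1$, $d\in D_1$ subject to crossed-module-type relations), and simple connectivity of $C$ and $D$ does not make these contributions go away, since it constrains $\pi_1$ rather than $C_1$ and $D_1$ themselves. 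Making your step precise essentially requires the Brown--Higgins theorem that the ``associated chain complex with operators'' functor is monoidal, a substantial result that is neither proved nor cited in this paper; as written, the K\"unneth step is an unsupported assertion rather than a proof.

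The paper avoids this entirely by a homotopy-theoretic rather than homological argument: since $\tilde{F}$ and $\tilde{F'}$ are free and acyclic, a Whitehead-type theorem \cite[Theorem 3.2]{BG89} upgrades acyclicity to contractibility; the tensor product of contractible crossed complexes is contractible, because $\otimes$ preserves homotopy equivalences in each variable by the monoidal closed structure of \cite{BH87}; and contractible implies acyclic, giving asphericity of $F\otimes F'$. This is where the freeness hypothesis earns its keep (Whitehead-type theorems fail without it), and it replaces your K\"unneth computation by a one-line homotopy argument. I recommend you substitute this step; the rest of your outline then goes through.
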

\begin{proof}
It is sufficient to assume $F,F'$ are connected. Since $F,F'$ are
aspherical, their universal covers $\tilde{F}, \tilde{F'}$ are
acyclic. Since they are also free, they are contractible, by a
Whitehead type theorem,  \cite[Theorem 3.2]{BG89}. But the tensor
product of free crossed complexes is free, by \cite[Cor. 5.2]{BH91}.
Therefore $\tilde{F}\otimes \tilde{F'}$ is contractible, and hence
acyclic. Therefore $F \otimes F'$ is aspherical.
\end{proof}

\noindent {\it Acknowledgement} We thank a referee for helpful
comments.

\end{document}